\def\subchunk#1{\medbreak\noindent{\it#1}}
\newtheorem{theorem}{Theorem}
\newtheorem{definition}[theorem]{Definition}
\newtheorem{lemma}[theorem]{Lemma}
\newtheorem{proposition}[theorem]{Proposition}
\newtheorem{corollary}[theorem]{Corollary}
\newtheorem{remark}[theorem]{Remark}
\newtheorem{assumption}[theorem]{Assumption}
\journal{Operations Research Letters}
\begin{document}

\begin{frontmatter}

\title{On Stochastic Auctions in Risk-Averse Electricity Markets With Uncertain Supply}

\author[label1]{Ryan Cory-Wright\corref{cor1}}
\address[label1]{Operations Research Center, Massachusetts Institute of Technology Cambridge, MA, USA}
\cortext[cor1]{Corresponding author}
\ead{ryancw@mit.edu}

\author[add2,add3]{Golbon Zakeri}
\address[add2]{Department of Engineering Science and The Energy Centre, University of Auckland, Auckland, New Zealand}
\ead{g.zakeri@auckland.ac.nz}

\begin{abstract}
This paper studies risk in
a stochastic auction
which facilitates the integration of renewable generation in electricity markets. We model market participants who are risk averse and reflect their risk aversion through coherent risk measures. We uncover a closed form characterization of a risk-averse generator's optimal pre-commitment behaviour for a given real-time policy, both with and without risk trading. 

\end{abstract}

\begin{keyword}
OR in Energy \sep Stochastic programming \sep  Risk-aversion \sep Risky equilibria. 
\end{keyword}

\end{frontmatter}


\section{Introduction}
Renewable power generation is an increasingly attractive investment option for participants in electricity pool markets, as it does not emit carbon and has a marginal cost of zero. Furthermore, investment in intermittent renewable generation is attractive from a regulatory standpoint, as wind and solar generators reduce the expected dispatch cost and do not emit carbon. However, renewable investment increases supply-side uncertainty. This creates difficulties for independent system operators (ISOs) when clearing electricity pool markets, as inflexible coal and nuclear generators may require several hours or more to implement a dispatch and intermittent power output is unknown this far in advance.

When intermittent renewable generators supply a small proportion of electricity, the ISO can efficiently manage deviations from forecasts by procuring suitable amounts of frequency-keeping and reserve generation. However, when intermittent generators supply a larger proportion of electricity, procuring suitable amounts of reserve generation becomes expensive and more efficient grid management strategies are required. Consequently, some market operators employ a two-market strategy, which involves:
\begin{enumerate}
    \item Clearing a pre-commitment market by assuming that renewable generation takes its forecast value.
    \item Letting nature select a realization of uncertainty.
    \item Clearing a real-time market, to balance deviations between renewables' forecast and realised generation output.
\end{enumerate}
This two-market structure allows inflexible generators to implement a dispatch, by providing them with a pre-commitment setpoint. However, the pre-commitment and real-time nodal prices might not converge in expectation, as the expected adjustment cost is not priced when computing the pre-commitment setpoint. As noted in \cite{Zavalaetal}, this price distortion is a market design flaw which may lead to systematic arbitrage opportunities.

More sophisticated uncertainty management strategies comprise modelling intermittent renewable generation as a random variable and computing a pre-commitment setpoint according to the optimal solution of a two-stage stochastic program, which minimizes the expected cost of generation plus adjustment. After uncertainty is realised, a real-time market is subsequently cleared by minimizing the cost of generating electricity plus adjusting to manage fluctuations from forecast renewable generation output.
This market-clearing strategy is known as stochastic dispatch, and has been studied by authors including \cite{BouffardEtal2005a, PZP, ZPBB, UncertainSupply}. Stochastic dispatch almost-surely induces efficiency savings in the long-run \citep[see][]{BirgeAndLoveaux}, because it explicitly prices the expected cost of deviating from a pre-commitment setpoint in the first stage.

In spite of the almost-sure existence of cumulative system savings, we cannot guarantee that all market participants benefit from stochastic dispatch.
Indeed, implementing stochastic dispatch could leave generators out of pocket, and cost recovery is only guaranteed in expectation. 
This raises the question: \textit{What happens if market participants are risk averse?}

This question is also of interest in a different context. One main criticism of stochastic dispatch, is that the system as a whole must have a unified view of the future distribution of outcomes, e.g. all agents must agree with the distribution of wind in the next hour. Allowing for risk aversion offers some flexibility here. Perceiving a different distribution of future outcomes by an agent, can often be equivalently modelled as that agent being risk averse, and equipped with a coherent risk measure. This perspective is often taken in finance, where a martingale measure emerges through a complete market and agents risks are traded \cite{FinanceBook}. We are interested in investigating the outcomes of such markets and we will return to this point in Section \ref{Riskaversesdm}.

\subsection{Contributions and Structure}
The main contributions of this paper are $(a)$ a characterization of the impact of pre-commitment on real-time nodal prices, and its interplay with contracts and $(b)$ a characterization of the impact of risk-aversion on pre-commitment.

The structure of the paper is as follows:
\begin{itemize}
    \item In Section \ref{background}, we briefly outline stochastic dispatch.
    \item In Section \ref{Riskaversesdm}, we study stochastic dispatch in a risk-averse context.
    When generators are endowed with
    coherent risk measures and cannot trade risk,
    the resultant risk-averse competitive equilibrium admits a solution, whenever nodal prices are capped.
    We obtain a closed-form relationship between each generator's pre-commitment and their real-time dispatch, and demonstrate this can results in pre-commitment supply shortfalls. Alternatively, inclusion of Arrow-Debreu securities leads to
    a second risked equilibrium, which itself yields excess pre-commitment. 
\end{itemize}

\section{Background}\label{background}
We start this section by reviewing the stochastic dispatch mechanism (SDM) presented in \cite{ZPBB}. SDM is a mechanism that explicitly models wind supply uncertainty using a probability distribution, and tailors a pre-commitment setpoint to this distribution. Optimal recourse actions are chosen after renewable generation output is revealed.

\subsection*{\rm\textbf{Notation}}
We use lowercase Roman symbols such as $x$ to denote deterministic variables, uppercase Roman symbols such as $X(\omega)$ to denote random variables and lowercase Greek symbols such as $\lambda_n(\omega)$ to denote prices. We use an assortment of sets and indices. We let $\omega \in \Omega$ represent a scenario in our sample space $\Omega$, which we assume to be finite. We let $\mathcal{F}$ be a closed, convex and non-empty set of flows which obey thermal limits, line capacities and the DC load-flow constraints imposed by Kirchhoff's laws. Finally, we let $i$ be the index of a generation unit, $\mathcal{N}$ be the set of all nodes in the network, and $\mathcal{T}(n)$ be the set of all generators located at node $n \in \mathcal{N}$.

We also use an assortment of problem data. We let $c_{i}$ be generator $i$'s marginal generation cost, which we take to be truthfully stated. We let $D_{n}(\omega)$ be the inelastic demand at node $n$ in scenario $\omega$, and $G_{i}(\omega)$ be generator $i$'s production capacity in scenario $\omega$. Finally, we let $r_{u,i}$ (respectively $r_{v,i}$) be generator $i$'s marginal cost of upward (downward) deviation. We require that $r_{u,i}, r_{v,i}>0$ for some generator $i$, as otherwise all generation units are infinitely flexible and can
be dispatched
after uncertainty is realised.

\subsection{The Stochastic Dispatch Mechanism (SDM)}
In SDM, we model renewable generation output by a set of samples from a continuous distribution, which constitutes an ensemble forecast of future uncertainty. Consequently, SDM is a Sample Average Approximation which yields a sequence of pre-commitment setpoints that asymptotically converge to the optimal setpoint as the number of scenarios considered increases \citep[see][]{Shapiro}. We determine SDM's pre-commitment setpoint, $x^{*}$, and the corresponding real-time dispatch policy $X^\star(\omega)$, by solving the following stochastic program: 
\begin{align*}
\text{SLP: } \min \ &   \mathbb{E}[c^\top X(\omega)+r^\top_{u} U(\omega) + r^\top_{v} V(\omega)] & \\
\text{s.t.} \ & \sum_{i \in \mathcal{T}(n)} X_{i}(\omega)+\tau_{n}(F(\omega)) \geq D_{n}(\omega),  & \forall \omega \in \Omega, \ \forall n \in \mathcal{N}, \\
& x + U(\omega) - V(\omega) = X(\omega),  \ &  \forall \omega \in \Omega,  \\
& F(\omega) \in \mathcal{F} , & \forall \omega \in \Omega, \\
& 0 \leq X(\omega) \leq G(\omega), & \forall \omega \in \Omega,\\
& U(\omega), \ V(\omega), \ x \geq 0 , & \forall \omega \in \Omega,
\end{align*}
where $x_{i}$ is generator $i$'s pre-commitment setpoint, the amount generator $i$ prepares to produce before uncertainty is realised, $X_{i}(\omega)$ is generator $i$'s real-time dispatch in scenario $\omega$, $U_{i}(\omega)$ (respectively $V_{i}(\omega)$) is generator $i$'s upward (downward) deviation from its setpoint in scenario $\omega$, $F(\omega)$ is the vector of branch flows through the network in scenario $\omega$, and $\tau_{n}(F(\omega))$ is the net
energy injected from the grid into node $n$ in scenario $\omega$.

After determining the pre-commitment setpoint $x^{*}$, nature selects the scenario $\hat{\omega}$, and the ISO solves the following recourse problem for the real-time dispatch $X(\hat{\omega})$:
 \begin{align*}
 \text{min} \quad  & c^\top X(\hat{\omega})+r^\top_{u} U(\hat{\omega}) + r^\top_{v} V(\hat{\omega}) & \\
 \text{s.t.} \quad & \sum_{i \in \mathcal{T}(n)} X_{i}(\hat{\omega})+\tau_{n}(F(\hat{\omega})) \geq D_{n}(\hat{\omega}), \ \forall n \in \mathcal{N},\quad & [\lambda_{n}(\hat{\omega})],  \\
 & X(\hat{\omega})- U(\hat{\omega}) + V(\hat{\omega}) = x^\star, & [\rho(\hat{\omega})],\\
  & F(\hat{\omega}) \in \mathcal{F}, \quad &  & \\
 & 0 \leq X(\hat{\omega}) \leq G(\hat{\omega}), & & \\
 & U(\hat{\omega}), V(\hat{\omega}) \geq 0, \quad &  &
 \end{align*}
 where $\lambda_n(\hat{\omega})$ is the dual multiplier for the supply-demand balance constraint at node $n$ in scenario $\hat{\omega}$, and $\rho(\hat{\omega})$ is the nonanticipavity dual multiplier in scenario $\hat{\omega}$.  Note that solving the recourse problem (rather than inspecting SLP's recourse policy), is necessary because the first-stage problem considers a Sample-Average-Approximation of uncertainty, and the scenario selected by nature is almost-surely not contained in SLP's ensemble forecast.

After solving both stages, the ISO pays $\lambda_{j(i)}(\hat{\omega})X_{i}(\hat{\omega})$ to generator $i$ and charges $\lambda_{n}(\hat{\omega})D_{n}(\hat{\omega})$ to consumer $n$, where $j(i)$ is the index of the node where generator $i$ is located (but does not compensate each generator's first-stage decision \citep[see][for a justification]{ZPBB}
). The ISO does not incur a penalty for deviating from $f$ to $F(\hat{\omega})$, and therefore is never out of pocket \citep[Proposition 1]{ZPBB}. Moreover, generators recover their fuel and deviation costs in expectation (but not with probability $1$), as both quantities are priced when clearing the first stage \citep[see][]{UncertainSupply}.

\subsection{On Pre-Commitment and Real-Time Nodal Prices}
We now recall some results from convex analysis which we will invoke repeatedly in this paper:

\begin{proposition}\label{nonanticapitivityineq}
Let $x, \hat{x} \geq 0$ be two feasible pre-commitment setpoints, with corresponding optimal recourse dispatches $X(\omega)$, $\hat{X}(\omega)$. Then, the optimal second-stage nonanticipavity dual multipliers, $\rho(\omega), \hat{\rho}(\omega)$ obey the following relationship:
\begin{align*}
    \mathbb{P}\bigg(\left\langle x-\hat{x}, \rho(\omega)-\hat{\rho}(\omega) \right\rangle \geq 0\bigg)=1.
\end{align*}
\end{proposition}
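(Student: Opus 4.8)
The plan is to identify the optimal nonanticipativity multiplier as a subgradient of the convex second-stage value function, and then appeal to monotonicity of the subdifferential. Fix a scenario $\omega \in \Omega$ and, holding all other data fixed, let $Q(x,\omega)$ be the optimal value of the recourse problem when its nonanticipativity constraint reads $X(\omega) - U(\omega) + V(\omega) = x$. Since the recourse problem is a linear program in which the pre-commitment setpoint enters only as the right-hand side of this equality, $Q(\cdot,\omega)$ is convex and piecewise linear on the set of setpoints for which the recourse problem is feasible; by hypothesis both $x$ and $\hat{x}$ lie in this set, and $Q$ is finite there since both setpoints are feasible and the recourse objective is bounded below on the feasible region.

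The key step is to show that the optimal multiplier $\rho(\omega)$ is a subgradient of $Q(\cdot,\omega)$ at $x$, and likewise $\hat{\rho}(\omega) \in \partial_x Q(\hat{x},\omega)$. This is the textbook right-hand-side sensitivity result for linear programs. By LP duality, $Q(\cdot,\omega)$ equals the supremum, over dual-feasible solutions, of an affine function of the setpoint whose slope is the nonanticipativity multiplier; as the dual feasible region does not depend on the setpoint, $Q(\cdot,\omega)$ is a supremum of affine functions, and a dual-optimal $\rho(\omega)$ attains this supremum at $x$, whence $\rho(\omega) \in \partial_x Q(x,\omega)$. Equivalently and more explicitly: the dual solution that is optimal at $\hat{x}$ remains dual feasible for the recourse problem at $x$ (changing the setpoint only changes the dual objective), so weak duality gives $Q(x,\omega) \ge Q(\hat{x},\omega) + \langle \hat{\rho}(\omega),\, x - \hat{x} \rangle$, and symmetrically $Q(\hat{x},\omega) \ge Q(x,\omega) + \langle \rho(\omega),\, \hat{x} - x \rangle$.

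Adding these two inequalities cancels the $Q$ terms and yields $\langle x - \hat{x},\ \rho(\omega) - \hat{\rho}(\omega) \rangle \ge 0$, i.e.\ monotonicity of $\partial_x Q(\cdot,\omega)$. Because $\Omega$ is finite and this inequality holds for every $\omega \in \Omega$ --- and for every choice of optimal multipliers, each of which is a subgradient of the same convex function --- it holds with probability one, which is the claim. I anticipate no serious obstacle: the only points requiring care are the routine bookkeeping ensuring the recourse LP is feasible and bounded for both setpoints (so optimal multipliers exist and strong duality holds), and fixing the sign convention for $\rho(\omega)$ consistently so that the monotonicity inequality comes out as $\ge 0$ rather than $\le 0$.
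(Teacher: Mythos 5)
Your proof is correct and follows essentially the same route as the paper: identify the optimal nonanticipativity multiplier $\rho(\omega)$ as a subgradient of the convex recourse value function $Q(\cdot,\omega)$ and conclude by monotonicity of the subdifferential. The only difference is that you derive the monotonicity inequality explicitly by adding the two weak-duality (subgradient) inequalities, whereas the paper simply cites Rockafellar's maximal monotonicity of subdifferential mappings; your version is a self-contained expansion of the same argument.
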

\begin{proof}
This follows directly from the observation that the subgradient of the first-stage dispatch problem with respect to $x$ in scenario $\omega$, $\rho(\omega)$, is a maximal monotone operator \citep[see][]{RockafellarSubdiff}.
\end{proof}

\begin{corollary}\label{corr:priceprecommitmentrelationship}
Let generator $i$'s dispatch under SDM in scenario $\omega$ be $0 < X_i(\omega) < G_i$. Then, for sufficiently small changes $\delta$ in $x_i$ (such that the same inequality constraints remain binding in the real-time problem), we have that
\begin{itemize}
    \item if $x_{i,new}=x_i +\delta>x_i$ then either $\Delta \lambda_{j(i),new}(\omega)=0$ or $\Delta \lambda_{j(i),new}(\omega)=r_{u,i}+r_{v,i}$, and,
    \item if $x_{i,new}=x_i +\delta<x_i$ then either $\Delta \lambda_{j(i),new}(\omega)=0$ or $\Delta \lambda_{j(i),new}(\omega)=-r_{u,i}-r_{v,i}$.
\end{itemize}
\end{corollary}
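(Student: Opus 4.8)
The plan is to read the relation between $\lambda_{j(i)}(\omega)$ and the $i$-th nonanticipativity multiplier $\rho_i(\omega)$ off the KKT conditions of the real-time problem, at the original setpoint and at the perturbed one, and to reduce the claim to a statement about how $\rho_i(\omega)$ may move. Because $0<X_i(\omega)<G_i$, the bounds $0\le X_i(\omega)\le G_i$ are inactive, so stationarity in $X_i(\omega)$ reduces to an affine link, say $\lambda_{j(i)}(\omega)=c_i+\rho_i(\omega)$. Stationarity in $U_i(\omega)$ and $V_i(\omega)$ together with complementary slackness on $U_i(\omega),V_i(\omega)\ge 0$ confines $\rho_i(\omega)$ to $[-r_{v,i},r_{u,i}]$, forcing $\rho_i(\omega)=r_{u,i}$ when generator $i$ deviates strictly upward ($U_i(\omega)>0$) and $\rho_i(\omega)=-r_{v,i}$ when it deviates strictly downward ($V_i(\omega)>0$). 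Since by hypothesis the same inequality constraints remain binding, $0<X_i(\omega)<G_i$ still holds at $x_{i,new}$, so the same affine link holds there and $\Delta\lambda_{j(i),new}(\omega)=\rho_{i,new}(\omega)-\rho_i(\omega)$, which a priori lies in $[-(r_{u,i}+r_{v,i}),\,r_{u,i}+r_{v,i}]$.

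Next I would show this difference can only be $0$ or $\pm(r_{u,i}+r_{v,i})$. The binding-constraint hypothesis freezes the active set of the capacity and network constraints, leaving generator $i$'s deviation regime as essentially the only quantity free to respond to a perturbation of $x_i$. If the sign of the net deviation $U_i(\omega)-V_i(\omega)$ is unchanged (or it stays zero with the same binding pattern on $U_i(\omega),V_i(\omega)$), then the complementary-slackness relations force $\rho_i(\omega)$ to keep the same value --- an endpoint of $[-r_{v,i},r_{u,i}]$ when generator $i$ strictly deviates, and otherwise $\lambda_{j(i)}(\omega)-c_i$, which is itself unchanged because the price is then set by a marginal unit that the frozen active set leaves untouched --- so $\Delta\lambda_{j(i),new}(\omega)=0$. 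Otherwise the net deviation changes sign, and for $\delta$ small this can only be a direct switch between $U_i(\omega)>0$ and $V_i(\omega)>0$, which sends $\rho_i(\omega)$ from one endpoint of $[-r_{v,i},r_{u,i}]$ to the other; hence $|\Delta\lambda_{j(i),new}(\omega)|=r_{u,i}+r_{v,i}$.

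Finally, to fix the sign I would invoke Proposition~\ref{nonanticapitivityineq} with $\hat{x}$ the perturbed setpoint, which differs from $x$ only in coordinate $i$: the inner product there collapses to $(x_i-\hat{x}_i)\big(\rho_i(\omega)-\hat{\rho}_i(\omega)\big)\ge 0$, so $\rho_i(\omega)$ moves weakly in the same direction as $x_i$. Combined with the affine link $\lambda_{j(i)}(\omega)=c_i+\rho_i(\omega)$, this forces $\Delta\lambda_{j(i),new}(\omega)\ge 0$ when $x_i$ increases and $\Delta\lambda_{j(i),new}(\omega)\le 0$ when $x_i$ decreases; intersecting with the three admissible values found above gives exactly the two stated dichotomies.

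The step I expect to be the main obstacle is the second one --- excluding every value strictly between $0$ and $r_{u,i}+r_{v,i}$ in magnitude. This is where the hypothesis that the same inequality constraints remain binding is indispensable: it stops any other generator from reaching a capacity bound and any line from changing its congestion status while generator $i$'s dispatch shifts, so the price at node $j(i)$ can react only through generator $i$'s own kinked offer curve, whose unique kink has height exactly $r_{u,i}+r_{v,i}$; and for $\delta$ small the only available reaction is the all-or-nothing flip of generator $i$'s net deviation. Some care is also needed to select the real-time dual consistently at the two setpoints, because the at-setpoint case is degenerate.
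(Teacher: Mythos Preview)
Your proposal is correct and follows essentially the same route as the paper. Both arguments (i) use the KKT condition in $X_i(\omega)$ to link $\lambda_{j(i)}(\omega)$ affinely to $\rho_i(\omega)$, (ii) use the KKT conditions in $U_i(\omega),V_i(\omega)$ to confine $\rho_i(\omega)$ to an interval of length $r_{u,i}+r_{v,i}$, and (iii) invoke Proposition~\ref{nonanticapitivityineq} to fix the sign of $\Delta\rho_i(\omega)$ and hence of $\Delta\lambda_{j(i)}(\omega)$. Your sign convention is the mirror image of the paper's (you write $\lambda_{j(i)}=c_i+\rho_i$ with $\rho_i\in[-r_{v,i},r_{u,i}]$, while the paper writes $\lambda_{j(i)}+\rho_i=c_i$ with $\rho_i\in[-r_{u,i},r_{v,i}]$), but this is cosmetic and your version is the one that actually lines up with the direction stated in the corollary. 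The only substantive difference is in step (ii)$\to$(iii): the paper disposes of intermediate values by asserting that one can always select an optimal basis in which $\rho_i(\omega)$ sits at an endpoint of its interval, whereas you reach the same conclusion by case analysis on whether generator $i$'s net deviation flips sign. Your explicit case analysis is more transparent about where the ``same binding constraints'' hypothesis is used, and your closing caveat about consistent dual selection in the degenerate at-setpoint case is exactly the issue the paper's basis-selection remark is meant to address.
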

\begin{proof}
Observe that if $0 < X_i(\omega) < G_i$ then the real-time dispatch problem's KKT condition with respect to $X_i(\omega)$ is:
\begin{align*}
    \lambda_{j(i)}(\omega)+\rho_i(\omega)=c_i,
\end{align*}
meaning we have that $\Delta\lambda_{j(i)}(\omega)+\Delta\rho_i(\omega)=0$.

Moreover, the KKT conditions with respect to $U_i(\omega)$, $V_i(\omega)$ imply that $-r_{u,i} \leq \rho_i(\omega) \leq r_{v,i}$. Therefore, there exists some optimal basis where $\rho_i(\omega)=r_{v,i}$ or $\rho_i(\omega)=-r_{u,i}$ for each generator $i$, without loss of generality. Invoking Proposition \ref{nonanticapitivityineq} then yields the result, where we use the continuity of the optimal primal solution in the problem data \citep[see][]{NemirovskiLectureNotes} to ensure that the same primal constraints remain binding.
\end{proof}
\begin{remark}\label{monotoneoperatorrtp}
The real-time nodal price $\lambda$ is a maximal monotone operator with respect to the realised net demand $D$. Therefore, for two given real-time demand vectors $D, \hat{D}$ the real-time prices obey the relationship $\left \langle \hat{D}-D, \hat{\lambda}-\lambda \right \rangle \geq 0$, and, since generator supply margins are nondecreasing functions of the real-time price, deterministic generators prefer low wind (higher net demand) periods. That is, if generators are risk-averse then their risk-aversion causes them to place additional emphasis on high-wind periods.
\end{remark}

\section{Two Risk-Averse Stochastic Auctions}\label{Riskaversesdm}
In this section we study the outcome of agent interactions when agents are risk averse with no
opportunity to trade risk, and uncover the inefficiencies that transpire as a result. We then move on to completing the
risk trade market, and demonstrate that much like in finance \cite{FinanceBook}, when coherent  risk measures are used and in presence of a complete market, a martingale measure emerges, leading to an equivalent social planning model that embeds participants' risks, and efficiency is restored.

We restrict our attention to law-invariant coherent risk measures because of their natural dual representability  \citep[see][]{Kusuoka2001}. Our analysis resembles the analysis of risk-trading conducted by \cite{RalphAndSmeers} although we restrict our attention to energy-only markets. This restriction allows us to exploit the properties of risk-averse newsvendors established by \citet{MultiProductNewsVendor}.


\begin{definition}\rm
A coherent risk measure, $\rho:\mathcal{Z} \mapsto \mathbb{R}$, as defined in \cite{Artzneretal}, is a function which measures the risk-adjusted disbenefit of a random variable $Z$.

In this paper, we will work with the dual representation of a coherent risk measure, i.e. there exists a convex set of measures $\mathcal{D}$ where $\rho$ can be defined by:
\begin{gather*}
    \rho(Z)=\max_{\mu \in \mathcal{D}} \mathbb{E}_{\mu}[Z].
\end{gather*}
By Kusuoka's Theorem \citep[see][]{Kusuoka2001} each coherent risk measure can be represented in the following mean-risk form for some risk coefficient $\kappa$ and some risk set $\mathcal{D}$:
\begin{align*}
    \rho(Z)&
    =-\mathbb{E}[Z]+\kappa \sup_{\mu \in D} \int_{\beta=0}^{\beta=1}\dfrac{1}{\beta}q_{\beta}[Z] \mu d\beta\\
    & =-\kappa \sup_{\mu \in D} \int_{\beta=0}^{\beta=1}\mathrm{CVaR}_{\beta}[Z] \mu d\beta,
\end{align*}
where
\begin{align*}
    q_{\beta}[Z]=\min_{\eta \in \mathbb{R}}\mathbb{E}[\max((1-\beta)(\eta-Z), \beta(Z-\eta))]
 \end{align*} is the weighted mean-deviation from the $\beta$th quantile, $\kappa$ is a constant which prices risk by balancing the desirability of maximizing $\mathbb{E}[Z]$ with the undesirability of fluctuations towards the left tail of $Z$, and $\mathcal{D}$ is a convex subset of probability measures. 
 \end{definition}
 As observed by \citet{PhilpottFerrisWets16}, whenever the sample space $\Omega$ is finite, at least one of the worst-case probability measures in $\mathcal{D}$ is an extreme point of $\mathcal{D}$. Consequently, in a Sample Average Approximation (SAA) setting, $\rho(Z)$ is equal to the optimal value of the following linear program:
\begin{align*}
    \rho(Z)= \min \quad & \theta \ \text{s.t. }\ \theta \geq \sum_{\omega}\mathbb{P}_{m}(\omega) Z({\omega}), \ \forall m,
\end{align*}
where $\mathbb{P}_m$ is the measure which corresponds to the $m$th extreme point of $\mathcal{D}$. Moreover, if $\mathcal{D}$ is polyhedral then the cardinality of $m$ is finite and the above problem can be solved in a tractable fashion via linear programming, and otherwise it can be solved via a cutting-plane method, such as a Bundle Method \citep[see][]{lemarechal1995new}.

\subsection{Risk-Averse SDM Without Risk Trading
}
\label{sssec:RiskAversionNOAD}

SDM is equivalent to a system where risk neutral price-taking participants optimize their return against the so called Walrasian auctioneer's announced prices \citep[see][]{ZPBB}. Below, we investigate a risk-averse equilibrium, wherein each generation agent $i$ is endowed with a coherent risk measure $\rho_{i}$, with a view to show that the risk-averse equilibrium always admits at least one solution. Observe that since the ISO and the market clearing agent do not have first-stage actions, they perform the same action under any coherent risk measure. Therefore, we treat both agents as risk-neutral without loss of generality. We now state the ISO's problem (PISO) and the market clearing problem (MC) below:
\begin{gather*}
\text{PISO}(\omega):\ \max \sum_{n}\lambda_{n}(\omega)\tau_{n}(F(\omega)) \ \text{s.t.\thinspace\ } F(\omega) \in \mathcal{F},
\end{gather*}
\begin{gather*}
\text{MC}(\omega):\ \max -\sum_{n}\lambda_{n}(\omega)\big(\sum_{i \in T(n)}X_{i}(\omega)+\tau_{n}(F(\omega))-D_{n}(\omega)\big) \\
\text{s.t.\thinspace\ } \lambda_{n}(\omega) \geq 0, \ \forall n \in \mathcal{N},
\end{gather*}
Notably, however, each generation agent $i$ makes a pre-commitment decision $x_i$ in the first-stage, which may be affected by risk-aversion. Therefore, we reflect each generation agent $i$'s attitude towards risk via the risk measure $\rho_i(\cdot)$. Given this risk measure and prices $\lambda_{i}(\omega)$ in each scenario $\omega$, each generation agent $i$ maximizes its risk-adjusted expected profit by determining the actions $(x, X(\omega), U(\omega), V(\omega))$ which solve the following stochastic optimization problem:
\begin{align*}
\text{RAP}(i): \max \quad & \rho_{i}\Bigl( (\lambda_{j(i)}(\omega)-c_{i}) X_{i}(\omega )-r_{u,i} U_{i}(\omega) -r_{v,i} V_{i}(\omega ) \Bigr) \\
\text{s.t.} \quad & x_{i} + U_{i}(\omega) - V_{i}(\omega) = X_{i}(\omega),  \quad \forall \omega \in \Omega, \\
& 0 \leq X_{i}(\omega ) \leq G_{i}(\omega), \quad \forall \omega \in \Omega,\\
& U_{i}(\omega ),V_{i}(\omega ), x_{i} \geq 0, \quad \forall \omega \in \Omega.
\end{align*}

Observe that generation agent $i$ almost-surely recovers its risk-adjusted costs in the long-run, since it can choose the action $(x, X, U, V)=(0, 0, 0, 0)$ and earn a certain payoff of $0$.

The collection of the problems PISO($\omega$), RAP($i$) and $MC(\omega)$ then defines a risk-averse competitive equilibrium, which we refer to as RAEQ. Our subsequent analysis assumes that RAEQ admits a solution, and therefore requires an existence result. To obtain this
result, we require the following intermediate lemma:

\begin{lemma}
\label{gensetisbounded}
Let generation agent $i$ be a risk-averse price-taking generation agent endowed with the coherent risk measure $\rho_i$. Then agent $i$'s optimization problem, RAP($i$), has a closed, convex and bounded strategy set.
\end{lemma}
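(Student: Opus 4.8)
The plan is to exhibit RAP($i$)'s strategy set (after discarding strictly dominated strategies) as a polytope, from which closedness, convexity and boundedness follow at once. Convexity and closedness are the easy part: since $\Omega$ is finite, the constraints of RAP($i$) form a finite system of affine equalities and inequalities in the variables $(x_i, X_i(\omega), U_i(\omega), V_i(\omega))_{\omega\in\Omega}$, so the feasible region is a polyhedron. The real work is boundedness, and here there is a subtlety to confront head-on: the literal feasible region is unbounded, because $(x_i, X_i(\omega), U_i(\omega), V_i(\omega))=(0,0,t,t)$ is feasible for every $t\ge 0$. I would therefore argue that such wasteful points are strictly dominated and may be removed without changing RAP($i$)'s optimal value, exploiting $r_{u,i},r_{v,i}>0$.

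Concretely, set $\bar G_i:=\max_{\omega\in\Omega}G_i(\omega)<\infty$ and let $\Pi_i(\omega)$ denote the profit integrand inside $\rho_i$ in RAP($i$). The dispatch is already bounded by $0\le X_i(\omega)\le G_i(\omega)$. I would then run three dominance reductions in order. (i) If $x_i>\bar G_i$, then in every scenario $V_i(\omega)=x_i+U_i(\omega)-X_i(\omega)\ge x_i-\bar G_i>0$, so lowering $x_i$ to $\bar G_i$ and every $V_i(\omega)$ by $x_i-\bar G_i$ preserves feasibility, leaves each $X_i(\omega)$ unchanged, and raises $\Pi_i(\omega)$ by $r_{v,i}(x_i-\bar G_i)>0$ in each scenario. (ii) Granting now $x_i\le\bar G_i$, if $U_i(\omega)>\bar G_i$ in some scenario, then $V_i(\omega)\ge U_i(\omega)-\bar G_i>0$ there, so decreasing both $U_i(\omega)$ and $V_i(\omega)$ by $U_i(\omega)-\bar G_i$ preserves feasibility, fixes $X_i(\omega)$, and strictly raises $\Pi_i(\omega)$. (iii) Symmetrically, if $V_i(\omega)>\bar G_i$ then $U_i(\omega)\ge V_i(\omega)-\bar G_i>0$ (using $x_i\le\bar G_i$), and the analogous reduction applies. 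In each case the modified profit dominates the original scenariowise, so by monotonicity of the coherent risk measure $\rho_i$ the RAP($i$) objective does not decrease. Hence the linear constraints $x_i\le\bar G_i$, $U_i(\omega)\le\bar G_i$, $V_i(\omega)\le\bar G_i$ may be appended to RAP($i$) without loss of optimality, and the resulting strategy set is cut out by finitely many affine (in)equalities together with box bounds in $[0,\bar G_i]$; that is, a polytope, which is closed, convex and bounded.

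I expect the main obstacle to be exactly this gap between the literal feasible region (unbounded) and the effective strategy set: one must notice that simultaneous up- and down-deviations, and over-commitment beyond the largest attainable capacity, are strictly suboptimal, and then carry out the bookkeeping ensuring the compensating adjustments keep $U_i,V_i\ge 0$ and $0\le X_i(\omega)\le G_i(\omega)$. The hypotheses $r_{u,i},r_{v,i}>0$ and the monotonicity of $\rho_i$ are precisely what make these reductions valid; it is also important that the appended bounds be kept linear — rather than imposing a complementarity condition such as $U_i(\omega)V_i(\omega)=0$ — so that convexity of the strategy set is not lost.
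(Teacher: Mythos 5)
Your argument is correct and is essentially the paper's own proof: both cap $x_i$ at $\max_\omega G_i(\omega)$ without loss of optimality using the deviation-cost dominance, then bound $U_i(\omega),V_i(\omega)$ by eliminating wasteful simultaneous deviations, and conclude closedness and convexity from the linearity of the constraints. Your write-up is merely more explicit about the step the paper compresses into the identities $U_i(\omega)=\max(X_i(\omega)-x_i,0)$ and $V_i(\omega)=\max(x_i-X_i(\omega),0)$, namely that the literal feasible set is unbounded and the box bounds are appended only without loss of optimality.
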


\begin{proof}
The constraint $0 \leq X_{i}(\omega) \leq G_{i}(\omega)$ implies generator $i$'s optimal action, $x^\star_i$, satisfies the inequality $0 \leq x^\star_i \leq \max_{\omega} \{G_{i}(\omega)\}$,  as otherwise it can decrease its setpoint from $x_i$ to $G_i$ and almost-surely earn an additional profit of $(x_i-G_i)r_{v,i}$, which implies that if $x_i >G_i$ is an optimal setpoint then $G_i$ is also an optimal setpoint. Therefore, we can introduce the constraint ${0 \leq x^\star_i \leq \max_{\omega} \{G_{i}(\omega)\}}$ into the problem RAP($i$), without loss of optimality. The restrictions on $X$ and $x$ then imply that ${0 \leq U_{i}(\omega), V_{i}(\omega) \leq G_{i}(\omega)}$, since $U_{i}(\omega)=\max(X_{i}(\omega)-x_{i}, 0)$ and ${V_{i}(\omega)=\max(x_{i}-X_{i}(\omega), 0)}$. Therefore, the strategy space RAP($i$) is bounded.

The strategy space is also closed and convex, because it is defined by the intersection of
linear inequality constraints.
\end{proof}

We also require the following assumption:
\begin{assumption}\rm
\label{VOLLAssumption}
The optimal choice of dual price $\lambda_{n}(\omega)$ is bounded from above by the Value of Lost Load, or VOLL, for all nodes $n$ and all scenarios $\omega$.
\end{assumption}

Assumption \ref{VOLLAssumption} is common in power system applications; for instance, the New Zealand Electricity Market has a price cap of VOLL=$\$20,000$ per MWh, meaning consumers are willing to curtail their load at a marginal price of $\$20,000$ per MWh in the short-run (see \cite{Stoft} for a general theory). Moreover, \citep[Lemma $1$]{ZPBB} establishes that for a fixed pre-commitment setpoint $x$ and set of real-time demand realisations, there exists some price cap such that Assumption \ref{VOLLAssumption} holds everywhere except a set of measure $0$.

Combining Lemma \ref{gensetisbounded} and Assumption \ref{VOLLAssumption} then yields:
\begin{theorem} \label{raeqadmitsasolution}
Suppose that Assumption \ref{VOLLAssumption} holds. Then, RAEQ admits a solution.
\end{theorem}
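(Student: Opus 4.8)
The plan is to recognise RAEQ as the Nash equilibrium of an ordinary (not generalised) concave $n$-person game with compact strategy sets and to obtain existence from Kakutani's fixed-point theorem — equivalently, from the Debreu--Glicksberg--Fan existence theorem for concave games, or by writing RAEQ as a variational inequality over a compact convex set and invoking the Hartman--Stampacchia theorem. To assemble the game, take as players the ISO (whose problem $\mathrm{PISO}(\omega)$ is separable across scenarios), each generation agent $i$ (solving $\mathrm{RAP}(i)$), and the market-clearing agent (solving $\mathrm{MC}(\omega)$ in each scenario). The key structural observation is that the feasible region of every player's problem involves only that player's own variables: $\mathrm{RAP}(i)$ constrains only $(x_i, X_i, U_i, V_i)$, $\mathrm{PISO}(\omega)$ only $F(\omega)$, and $\mathrm{MC}(\omega)$ only $\lambda(\omega)$, so the coupling is entirely through the objectives. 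It then suffices to check that each strategy set is nonempty, convex and compact, and that each payoff is jointly continuous and concave in the owner's variables.

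For the strategy sets, Lemma~\ref{gensetisbounded} furnishes exactly this for every $\mathrm{RAP}(i)$; the set $\mathcal{F}$ is nonempty, closed and convex by hypothesis and is bounded because it encodes thermal and line-capacity limits; and although the price cone $\{\lambda(\omega) \ge 0\}$ is unbounded, Assumption~\ref{VOLLAssumption} allows us to restrict the market-clearing agent to the box $0 \le \lambda_n(\omega) \le \mathrm{VOLL}$, which is nonempty, convex and compact. This restriction is harmless: the $\mathrm{MC}(\omega)$ objective is linear in $\lambda(\omega)$, so its supremum over $\{\lambda \ge 0\}$, when finite, is already attained in that box, and it fails to be finite precisely when some node is undersupplied — the degeneracy that Assumption~\ref{VOLLAssumption} rules out.

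For the payoffs, those of $\mathrm{PISO}(\omega)$ and $\mathrm{MC}(\omega)$ are bilinear in price and quantity, hence jointly continuous and affine — in particular concave — in the owner's variables; and the payoff of $\mathrm{RAP}(i)$ is a coherent risk-adjusted evaluation of a profit that is affine in $(x_i, X_i, U_i, V_i)$ for fixed prices, hence concave in those variables by the dual representation of $\rho_i$, while joint continuity in $(x_i, X_i, U_i, V_i, \lambda)$ follows from the bilinearity of the profit together with the Lipschitz continuity of a coherent risk measure on the finite sample space $\Omega$. With these ingredients, Berge's maximum theorem makes each player's best-response correspondence nonempty- and convex-valued and upper hemicontinuous, so the product correspondence is a Kakutani map on the compact convex product of the strategy sets; any fixed point is by construction a solution of RAEQ (at such a point, optimality of $\mathrm{MC}(\omega)$ enforces supply--demand balance and the attendant complementarity against $\lambda_n(\omega)$, and optimality of $\mathrm{PISO}(\omega)$ prices congestion consistently).

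The step I expect to be the genuine obstacle — and the reason Assumption~\ref{VOLLAssumption} appears at all — is compactness of the price space: without a cap the market-clearing agent's problem is unbounded above whenever demand cannot be met, so no equilibrium need exist, and the VOLL cap (generic by \citep[Lemma~1]{ZPBB}) is exactly what turns RAEQ into a well-posed compact game. The only other point requiring care is the joint continuity of the risk-adjusted generator payoff in the prices, where finiteness of $\Omega$ and the Lipschitz property of coherent risk measures do the work; everything else is routine verification of the Kakutani hypotheses.
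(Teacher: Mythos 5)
Your proof is correct and follows essentially the same route as the paper: the paper invokes Rosen's existence theorem for concave $n$-person games and verifies exactly the three hypotheses you check (nonempty strategy sets; closed, convex, bounded strategy sets via Lemma~\ref{gensetisbounded}, compactness of $\mathcal{F}$, and the VOLL cap of Assumption~\ref{VOLLAssumption}; concavity of each payoff in own variables and continuity in the others'). Your unpacking of Rosen via Berge's maximum theorem and Kakutani is just the standard machinery underlying that theorem, so there is no substantive difference in approach.
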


\subchunk{Proof.}
To show this result, we follow the steps of Rosen's theorem \citep[see][]{Rosen} in arguing that the following three conditions hold:
\begin{enumerate}
    \item Each participant's strategy set is non-empty.
    \item Each participant's strategy set is closed, convex and bounded.
    \item Each participant's payoff function is concave in her strategy,
    and continuous in all other participants' strategies.
\end{enumerate}

The first statement holds, as each participant can choose the feasible action of setting all their decision variables to $0$, and therefore all participants have non-empty strategy sets.

To show that the second statement holds, we consider each class of agent separately. First, the problem PISO($\omega$)'s strategy space is the closed, convex and bounded set $\mathcal{F}$, which implies that the second statement holds for PISO($\omega$). Second, by Lemma \ref{gensetisbounded}, each generation agent $i$'s strategy set is closed, convex and bounded. Finally, by Assumption \ref{VOLLAssumption}, the market clearing agent's strategy space is a bounded set which can be seen to be closed and convex by inspection. Therefore, the second statement holds for all participants in RAEQ.

The third statement holds for all generation agents, as their decision variables are continuous and they are endowed with coherent risk measures, i.e., convex risk measures where positive homogeneity also holds. Consequently, their payoff functions are concave with respect to maximization. Similarly, the third statement holds for the ISO and market-clearing agents, as they solve wait-and-see optimization problems by choosing continuous decision variables from convex strategy sets.\qed


\begin{remark}\rm
{\rm \textbf{Existence does not imply uniqueness.}} \\
Theorem \ref{raeqadmitsasolution} shows that, with a price cap of VOLL, there exists a set of prices which clear the market when the participants are risk-averse. However, Theorem \ref{raeqadmitsasolution} does not imply that this set of prices is unique. Indeed, \cite{GerardEtAl} provides examples of risk-averse energy-only pool markets which admit multiple equilibria.
\end{remark}

We
now consider RAP($i$)'s first-order optimality condition, with a view to obtaining insight into the relationship between $x^\star$ and $X^\star(\omega)$. To do so, we
assume that $\Omega$ represents the true distribution of uncertainty. Consequently, the below results hold for the true distribution of uncertainty, while solutions to RAEQ constitute SAA estimators of the solution for the true distribution. However, SAA estimators for variational inequalities converge exponentially fast as we increase the sample size \citep[see][]{xu2010sample}. Therefore, the below results hold for RAEQ, in the limit of large samples.

We also need to clarify our understanding of the remuneration process. To see this, consider the auctioneer's price-setting problem MC$(\omega)$ in scenario $\omega$, and assume that the optimal choice of dual price is $VOLL>\lambda^\star_n(\omega)>0$ for some node $n$. Then, the corresponding DC-load-flow constraint must be met exactly, because otherwise the unique optimal choice of nodal price is VOLL. Therefore, we have that any $\lambda_n(\omega) \in [0, VOLL]$ is an optimal choice of dual price at this node, with all such choices providing the auctioneer with a payoff of $0$. That is, the remuneration scheme suggested by RAEQ provides highly degenerate dual prices. Consequently, we assume that participants are dispatched and remunerated in the same manner as SLP, although they may be risk-averse when making their pre-commitment decision. In this context, each generation agent $i$ solves a risk-averse newsvendor problem to determine their pre-commitment behaviour.

This observation allows us to characterize the impact of a generator's risk-aversion on their pre-commitment behaviour in the following proposition:

\begin{proposition}
\label{GeneralRepresentationNoAD}
Let generator $i$ be risk-averse and endowed with the coherent risk measure $\rho:\mathcal{Z} \mapsto \mathbb{R}$, which has the following Kusuoka representation:
\begin{align*}
    \rho[Z]=-\mathbb{E}[Z]+\kappa \sup_{\mu \in D} \int_{\beta=0}^{\beta=1}\dfrac{1}{\beta}q_{\beta}[Z] \mu d\beta.
\end{align*}
Then, for a given set of second stage dispatches $X^\star_{i}(\omega)$, generator $i$ makes the pre-commitment decision $x^\star_{i}$, where:
\begin{align*}
    x^\star_i=F^{-1}_{X^\star_{i}(\omega)}\Big(\dfrac{r_{u,i}}{(r_{u,i}+r_{v,i}){(1+\kappa(1-\bar{\beta}))}} \Big),\\
    \bar{\beta}=\int_{0}^{1}\mu^{RN}\beta d\beta; \ \kappa \in [0, \dfrac{1}{\bar{\beta}}],
\end{align*}
and $F^{-1}(\cdot)$ denotes the pseudoinverse CDF of the probability distribution of $X^\star_{i}(\omega)$.
\end{proposition}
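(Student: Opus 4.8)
\subchunk{Proof strategy.}
The plan is to freeze the real-time policy $X^\star_i(\cdot)$ and reduce RAP($i$) to a one-dimensional risk-averse newsvendor problem in the single variable $x_i$. With $X_i(\omega)$ held at $X^\star_i(\omega)$, the balance constraint together with $U_i,V_i\ge 0$ forces $U_i(\omega)=(X^\star_i(\omega)-x_i)^+$ and $V_i(\omega)=(x_i-X^\star_i(\omega))^+$, so RAP($i$) becomes
\begin{align*}
\max_{x_i\ge 0}\ \rho_i\!\Big(K(\omega)-r_{u,i}\,(X^\star_i(\omega)-x_i)^+-r_{v,i}\,(x_i-X^\star_i(\omega))^+\Big),
\end{align*}
where $K(\omega):=(\lambda_{j(i)}(\omega)-c_i)X^\star_i(\omega)$ is independent of $x_i$. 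This is exactly a risk-averse newsvendor with underage cost $r_{u,i}$, overage cost $r_{v,i}$ and ``demand'' $X^\star_i(\omega)$, so I can invoke the analysis of \citet{MultiProductNewsVendor}.

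Next I would write down the first-order optimality condition. By the concavity established in the third step of the proof of Theorem \ref{raeqadmitsasolution}, the displayed objective is concave in $x_i$, and (since $0\le X^\star_i\le G_i$) an optimal $x^\star_i$ is interior, so $x^\star_i$ is optimal iff $0$ lies in its superdifferential. Writing $\rho_i$ through its dual set $\mathcal D$ and applying a Danskin-type envelope argument, that superdifferential is generated by $\mathbb E_{\mu^\star}\!\big[\partial_{x_i}Z(x^\star_i,\omega)\big]$ for a worst-case measure $\mu^\star$ achieving the $\rho_i$ supremum at $x^\star_i$; since $\partial_{x_i}Z(x_i,\omega)=r_{u,i}\,\mathbbm{1}[X^\star_i(\omega)>x_i]-r_{v,i}\,\mathbbm{1}[X^\star_i(\omega)<x_i]$ (the kink at $X^\star_i(\omega)=x_i$ contributing only a harmless sub-interval), stationarity reads $r_{u,i}\,\mathbb P_{\mu^\star}[X^\star_i>x^\star_i]=r_{v,i}\,\mathbb P_{\mu^\star}[X^\star_i<x^\star_i]$, i.e.
\begin{align*}
x^\star_i=F^{-1}_{\mu^\star}\!\Big(\tfrac{r_{u,i}}{r_{u,i}+r_{v,i}}\Big),
\end{align*}
with $F_{\mu^\star}$ the c.d.f. of $X^\star_i(\omega)$ under $\mu^\star$.

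The remaining and most delicate step is to convert this worst-case-measure fractile into a fractile of the \emph{true} law of $X^\star_i(\omega)$. Here I would use the Remark after Corollary \ref{corr:priceprecommitmentrelationship}: since $\lambda_{j(i)}(\omega)$ is monotone in realised net demand and generation margins are nondecreasing in $\lambda_{j(i)}(\omega)$, on the under-commitment event $\{X^\star_i(\omega)<x^\star_i\}$ the profit $Z(x^\star_i,\omega)$ is nondecreasing in $X^\star_i(\omega)$, and $\mu^\star$ --- being the measure that minimises expected profit --- tilts mass toward the low-$X^\star_i$ (high-wind) scenarios. Consequently the Radon--Nikodym density $d\mu^\star/d\mathbb P$ is, on that tail, a decreasing function of $X^\star_i$, so $F_{\mu^\star}$ is a monotone transform of $F_{X^\star_i(\omega)}$ there; feeding the Kusuoka mixture $\int_0^1\beta^{-1}q_\beta[\cdot]\,\mu\,d\beta$ through this transform shows that on the relevant tail $F_{\mu^\star}(t)=(1+\kappa(1-\bar{\beta}))\,F_{X^\star_i(\omega)}(t)$, with $\bar{\beta}=\int_0^1\mu^{RN}\beta\,d\beta$. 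Substituting into the stationarity condition yields
\begin{align*}
x^\star_i=F^{-1}_{X^\star_i(\omega)}\!\Big(\tfrac{r_{u,i}}{(r_{u,i}+r_{v,i})(1+\kappa(1-\bar{\beta}))}\Big),
\end{align*}
and $\kappa\in[0,1/\bar\beta]$ is exactly the range in which the Kusuoka mean--risk functional stays monotone (hence genuinely coherent), while the argument of $F^{-1}_{X^\star_i(\omega)}$ automatically lies in $[0,1]$ because $\kappa(1-\bar\beta)\ge 0$. I expect the crux to be this last collapse: one must both verify that the $x_i$-free revenue term $K(\omega)$ does not upset the monotone ordering of $Z$ with $X^\star_i$ on the under-commitment tail, and identify the worst-case $\mu\in\mathcal D$ indexing the CVaR levels, so that the distortion weight on that tail reduces to the single scalar $1+\kappa(1-\bar{\beta})$.
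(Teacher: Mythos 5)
Your reduction of RAP($i$) to a one-dimensional newsvendor in $x_i$, and the stationarity condition under a worst-case measure, $F_{\mu^\star}(x^\star_i)=\tfrac{r_{u,i}}{r_{u,i}+r_{v,i}}$, are fine and parallel the paper's framing (the paper likewise freezes $X^\star_i(\omega)$ and treats the generator as a risk-averse newsvendor in the sense of \citet{MultiProductNewsVendor}). But from there the routes diverge, and your final step has a genuine gap: the identity $F_{\mu^\star}(t)=(1+\kappa(1-\bar\beta))\,F_{X^\star_i(\omega)}(t)$ on the under-commitment tail \emph{is} the quantitative content of the proposition, and you assert it from a qualitative monotonicity argument (worst-case density decreasing on the tail) rather than prove it. A decreasing Radon--Nikodym density only gives a monotone transform of the CDF, not proportionality with a specific constant. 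Moreover, the constant you claim does not emerge from the dual calculus you invoke: for the Kusuoka mixture, the worst-case density below the smallest quantile level in the support of the mixing measure is $1-\kappa+\kappa\,\mathbb{E}_{\mu^\star}[1/\beta]$ (for a single $\mathrm{CVaR}$ level $\beta_0$ it is $1-\kappa+\kappa/\beta_0$), which is not $1+\kappa(1-\bar\beta)$ except in degenerate cases; and whether $x^\star_i$ even falls below that quantile level must itself be verified, since the tail distortion is a step function rather than a single scalar.

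The paper avoids the change-of-measure step entirely: it stays with the mean--deviation (primal Kusuoka) form under the reference measure, substitutes $q_\beta[Z]=\beta(\mathrm{CVaR}_\beta[Z]+\mathbb{E}[Z])$, computes $\partial_x\mathbb{E}[Z_+]$ and $\partial_x\mathrm{CVaR}_\beta[Z_+]$ explicitly (using the observation that the relevant profit quantile sits on the over-commitment branch), and sets the first-order condition to zero. There the risk correction enters in two places --- a factor $(1+\kappa-\kappa\bar\beta)$ multiplying $\mathbb{P}(x>D)$ \emph{and} an additive term $-\kappa(e-s)(1-\bar\beta)$ --- and the clean factor $1+\kappa(1-\bar\beta)$ in the stated fractile appears only after netting the two against $1$. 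Your ``uniform tail distortion'' shortcut cannot reproduce that additive term, so to close the argument you would essentially have to carry out the paper's computation of the derivatives of the $q_\beta$/$\mathrm{CVaR}_\beta$ terms; as written, your last paragraph is a conjecture, not a proof.
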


\subchunk{Proof.}
See Appendix \ref{sssec:proofnoad}.
\qed

Proposition \ref{GeneralRepresentationNoAD} indicates that increasing generator $i$'s risk coefficient ${\kappa (1-\bar{\beta})}$ from its risk-neutral level ${\kappa (1-\bar{\beta})=1 (1-1) = 0}$ results in generator $i$ decreasing their pre-commitment setpoint from its risk-neutral level. We formalize this observation in the following corollary to Proposition \ref{GeneralRepresentationNoAD}.
\begin{corollary}\label{rnvsranoadb}
Let generator $i$ be endowed with the coherent risk measure $\rho_i$. Then, for a given set of second stage dispatches $X^\star_{i}(\omega)$, generator $i$ makes the pre-commitment decision $x^\star_i(X_i^\star(\omega))$, which is such that:
\begin{align*}
    x^\star_i(X_i^\star(\omega)) \leq x^\star_{i,RN}(X_i^\star(\omega)),
\end{align*}
where $x^\star_{i,RN}(X_i^\star(\omega))$ is generator $i$'s risk-neutral pre-commitment decision for the second-stage dispatches $X_i^\star(\omega)$.
\end{corollary}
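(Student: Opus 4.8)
The plan is to read the result off directly from the closed-form characterization in Proposition \ref{GeneralRepresentationNoAD}, using monotonicity of the quantile function. First I would apply Proposition \ref{GeneralRepresentationNoAD} twice to the \emph{same} family of second-stage dispatches $X^\star_i(\omega)$: once with generator $i$'s actual coherent risk measure $\rho_i$ (with parameters $\kappa$ and $\bar{\beta}$), and once with the risk-neutral measure, which is the special case $\kappa(1-\bar{\beta})=0$ (e.g. $\bar{\beta}=1$, or $\kappa=0$). Since the dispatch policy is held fixed, the pseudoinverse CDF $F^{-1}_{X^\star_i(\omega)}(\cdot)$ appearing in both formulas is one and the same object; only the argument at which it is evaluated changes. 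This gives
\begin{align*}
    x^\star_i(X_i^\star(\omega)) &= F^{-1}_{X^\star_i(\omega)}\!\left(\frac{r_{u,i}}{(r_{u,i}+r_{v,i})(1+\kappa(1-\bar{\beta}))}\right), \\
    x^\star_{i,RN}(X_i^\star(\omega)) &= F^{-1}_{X^\star_i(\omega)}\!\left(\frac{r_{u,i}}{r_{u,i}+r_{v,i}}\right).
\end{align*}

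Next I would compare the two arguments. Because $\bar{\beta}=\int_0^1 \mu^{RN}\beta\, d\beta$ is a weighted average of values in $[0,1]$, we have $\bar{\beta}\le 1$, and by hypothesis $\kappa\ge 0$, so $\kappa(1-\bar{\beta})\ge 0$ and hence $1+\kappa(1-\bar{\beta})\ge 1$. Since $r_{u,i},r_{v,i}>0$, the factor $(r_{u,i}+r_{v,i})(1+\kappa(1-\bar{\beta}))$ is at least $r_{u,i}+r_{v,i}$, and therefore
\begin{align*}
    \frac{r_{u,i}}{(r_{u,i}+r_{v,i})(1+\kappa(1-\bar{\beta}))} \;\le\; \frac{r_{u,i}}{r_{u,i}+r_{v,i}}.
\end{align*}

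Finally, I would invoke the fact that a pseudoinverse CDF (quantile function) is nondecreasing: applying $F^{-1}_{X^\star_i(\omega)}$ to the smaller argument yields a value no larger than applying it to the larger argument, so $x^\star_i(X_i^\star(\omega)) \le x^\star_{i,RN}(X_i^\star(\omega))$, as claimed. There is no real obstacle here; the only points needing a sentence of care are (i) observing that $\bar{\beta}\le 1$ so the correction term has the right sign, and (ii) that the distribution of $X^\star_i(\omega)$ — and hence $F^{-1}$ — is identical in the two comparisons because the second-stage dispatch policy is held fixed, so the corollary is genuinely a ceteris-paribus statement about the pre-commitment map rather than about equilibrium outcomes.
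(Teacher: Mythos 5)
Your proposal is correct and is exactly the argument the paper intends: the paper's proof is the one-line remark that the result follows from a simple application of Proposition \ref{GeneralRepresentationNoAD}, and you have simply filled in the elementary details (fixing the dispatch policy so the same quantile function appears, noting $\kappa(1-\bar{\beta})\ge 0$ shrinks the fractile, and using monotonicity of the pseudoinverse CDF).
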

\subchunk{Proof.}
The proof follows from a simple application of Proposition \ref{GeneralRepresentationNoAD}.
\qed

\begin{remark}
We note that Corollary \ref{rnvsranoadb} assumes a given set of $X_i^\star(\omega)$, while that these real time dispatches may change with a change in risk aversion. This leaves open the possibility that the {\em equilibrium} pre-commitment then may not have decreased from the starting pre-commitment. However, we draw the reader's attention to a {\em symmetric duopoly}, where demand should be met by two symmetric generators. In this case, the result is a clear decrease in pre-commitment, even in the equilibrium of the game when both firms change their risk aversion level.
\end{remark}

The analysis in the previous sections indicates that modifying the total pre-commitment magnitude impacts the payoffs to the market participants. Consequently, a pertinent question is ``what is the impact of generator risk-aversion on the generator's expected payoff?''. We provide a lower bound on this quantity in the following proposition:

\begin{proposition}\label{expectedriskaverseprofit1}
Let generator $i$'s risk-aversion be represented by the risk measure $\rho_i$, which has a Kusuoka representation such that $\bar{\beta}_i:=\int_0^1 \mu^{RN} \beta_i d\beta_i$, ${\kappa_i \in [ 0 , \frac{1}{\bar{\beta}_i} ]}$, and combine these quantities by defining $\alpha_i:=\frac{1}{1+\kappa_i(1-\bar{\beta}_i)}$. Then generator $i$'s expected profit is at least $(1-\alpha_i)r_{u,i}x_i^\star$.
\end{proposition}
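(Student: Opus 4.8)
The plan is to evaluate generator $i$'s realised profit scenario by scenario under the SLP dispatch and remuneration scheme assumed in this section, use the real-time KKT conditions to collapse each scenario's profit into a multiple of $x_i^\star$, and then bound the relevant probabilities using the critical-fractile identity of Proposition~\ref{GeneralRepresentationNoAD}. Writing $\alpha_i=\tfrac{1}{1+\kappa_i(1-\bar{\beta}_i)}$, that proposition gives $x_i^\star=F^{-1}_{X_i^\star(\omega)}\big(\tfrac{\alpha_i r_{u,i}}{r_{u,i}+r_{v,i}}\big)$; the first ingredient I would record is the elementary property of the pseudoinverse CDF that $\mathbb{P}\big(X_i^\star(\omega)<x_i^\star\big)\le\tfrac{\alpha_i r_{u,i}}{r_{u,i}+r_{v,i}}$, equivalently $\mathbb{P}\big(X_i^\star(\omega)\ge x_i^\star\big)\ge 1-\tfrac{\alpha_i r_{u,i}}{r_{u,i}+r_{v,i}}$.

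Next I would compute the profit $\pi_i(\omega)=(\lambda_{j(i)}(\omega)-c_i)X_i^\star(\omega)-r_{u,i}U_i^\star(\omega)-r_{v,i}V_i^\star(\omega)$ in an arbitrary scenario, using the same real-time KKT analysis as in the proof of Corollary~\ref{corr:priceprecommitmentrelationship} together with complementarity between the nonanticipativity multiplier $\rho_i(\omega)$ and the deviation variables. On scenarios with $X_i^\star(\omega)>x_i^\star$ the generator deviates upward, so $U_i^\star(\omega)=X_i^\star(\omega)-x_i^\star$ and $V_i^\star(\omega)=0$ (SLP never makes both deviation variables positive), and the pricing relation $\lambda_{j(i)}(\omega)+\rho_i(\omega)=c_i$ forces $\lambda_{j(i)}(\omega)-c_i=r_{u,i}$, plus a nonnegative term when $X_i^\star(\omega)=G_i(\omega)$; substitution yields $\pi_i(\omega)\ge r_{u,i}x_i^\star$. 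On scenarios with $X_i^\star(\omega)<x_i^\star$ the generator deviates downward, $V_i^\star(\omega)=x_i^\star-X_i^\star(\omega)$ and $U_i^\star(\omega)=0$, $\lambda_{j(i)}(\omega)-c_i=-r_{v,i}$ up to a boundary term at $X_i^\star(\omega)\in\{0,G_i(\omega)\}$ that does not decrease $\pi_i(\omega)$, and substitution yields $\pi_i(\omega)=-r_{v,i}x_i^\star$. On the tie set $X_i^\star(\omega)=x_i^\star$ one has $U_i^\star(\omega)=V_i^\star(\omega)=0$ and $\lambda_{j(i)}(\omega)-c_i\in[-r_{v,i},r_{u,i}]$, and I would invoke the basis selection already used in the proof of Corollary~\ref{corr:priceprecommitmentrelationship} to take $\lambda_{j(i)}(\omega)-c_i=r_{u,i}$ there, so that again $\pi_i(\omega)\ge r_{u,i}x_i^\star$.

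Finally I would take expectations. Decomposing $\mathbb{E}[\pi_i]$ over the events $\{X_i^\star(\omega)\ge x_i^\star\}$ and $\{X_i^\star(\omega)<x_i^\star\}$ and applying the scenariowise bounds gives
\begin{align*}
\mathbb{E}[\pi_i]\ \ge\ r_{u,i}\,x_i^\star\,\mathbb{P}\big(X_i^\star(\omega)\ge x_i^\star\big)-r_{v,i}\,x_i^\star\,\mathbb{P}\big(X_i^\star(\omega)<x_i^\star\big).
\end{align*}
Because $r_{u,i}+r_{v,i}>0$ and $x_i^\star\ge 0$, the right-hand side is nondecreasing in $\mathbb{P}\big(X_i^\star(\omega)\ge x_i^\star\big)$, so substituting the bound from the first step and simplifying gives $\mathbb{E}[\pi_i]\ge(r_{u,i}+r_{v,i})x_i^\star\big(1-\tfrac{\alpha_i r_{u,i}}{r_{u,i}+r_{v,i}}\big)-r_{v,i}x_i^\star=(1-\alpha_i)r_{u,i}x_i^\star$, which is the claim; when the law of $X_i^\star(\omega)$ is atomless at $x_i^\star$ the tie set is negligible and the inequalities above are in fact equalities.

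The step I expect to be the main obstacle is the bookkeeping behind the scenariowise profit identity: reconciling the sign convention of the recourse multipliers with the profit expression, verifying that the extra complementarity multipliers that appear when $X_i^\star(\omega)\in\{0,G_i(\omega)\}$ can only raise $\pi_i(\omega)$, and justifying the favourable resolution of the tie set $\{X_i^\star(\omega)=x_i^\star\}$; everything downstream is the elementary probabilistic manipulation above.
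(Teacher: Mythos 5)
Your proposal is correct and follows essentially the same route as the paper's proof: use the critical fractile from Proposition~\ref{GeneralRepresentationNoAD} to control the probabilities of the upward and downward deviation events, bound the payoff scenariowise by $r_{u,i}x_i^\star$ and $-r_{v,i}x_i^\star$ respectively, and take expectations. The only difference is that you re-derive the scenariowise payoff bounds from the real-time KKT and complementarity conditions, whereas the paper simply cites Corollary~1 of \cite{ZPBB} for them (and, if anything, you are more careful about the tie set $\{X_i^\star(\omega)=x_i^\star\}$ than the paper, which treats the quantile probability as exact).
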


\begin{proof}
Proposition \ref{GeneralRepresentationNoAD} shows that the quantity $\alpha_i:=\frac{1}{\kappa_i-\kappa_i \bar{\beta_i}}$ summarizes the relationship between generator $i$'s pre-commitment and its production, since $x_i^\star$ is a $\frac{\alpha_i r_{u,i}}{r_{u,i}+r_{v,i}}$ quantile of the distribution of $X_i^\star(\omega)$. Therefore, $X_i^\star(\omega) \leq x_i^\star$ with probability $\frac{\alpha_i r_{u,i}}{r_{u,i}+r_{v,i}}$. Moreover, it follows from \citep[Corollary 1]{ZPBB} that generator $i$ receives a payoff of at least $-r_{v,i}x_i^\star$ with probability $\frac{\alpha_i r_{u,i}}{r_{u,i}+r_{v,i}}$, and at least $r_{u,i}x_i^\star$ with probability $\frac{(1-\alpha_i) r_{u,i}+r_{v,i}}{r_{u,i}+r_{v,i}}$. Computing the expected payoff then yields the result.
\end{proof}

Proposition \ref{expectedriskaverseprofit1} suggests that generator risk-aversion results in a lower pre-commitment magnitude than the optimal risk-neutral setpoint, which reduces cumulative system welfare. Moreover, Proposition \ref{expectedriskaverseprofit1} indicates that marginal generators may have an incentive to behave in a risk-averse manner, as the expected profit of risk-neutral marginal generators who are never dispatched at their output capacity is $0$ (this follows from Proposition $1$), and the expected profit of risk-averse marginal generators is bounded from below by a strictly positive quantity.

To see that this situation can also arise in SDM, observe that generators can express their risk-aversion by inflating the relative magnitude of $r_{v,i}$, their marginal cost of deviating downward, in order that the auctioneer dispatches them at a lower pre-commitment setpoint. Indeed, a recent numerical study \cite{KazempourPinson} confirms our finding, by demonstrating that in a two-market stochastic equilibrium where generators are endowed with the $\mathrm{CVaR}$ risk criterion, generators prefer to pre-commit less generation when they are more risk-averse

Fortunately, \citet{RalphAndSmeers} provide a framework for extending SDM to cope with risk-aversion: introducing an auxiliary financial market wherein generators and the ISO can trade risk. If generators and the ISO are endowed with intersecting risk sets, then trading Arrow-Debreu securities causes each participant's effective risk-aversion to decrease to the least risk-averse participant's risk-aversion, leaving only residual risk. In this case, each generator's pre-commitment decision is equivalent to the decision made by a risk-averse system optimizer who uses the \emph{least} risk averse agent's risk set as its own.


\subsection{Risk-averse SDM With Risk Trading
}
\label{sssec:RiskAversionWithAD}
In this section, we extend our preceding analysis to consider a stochastic energy-only market where participants trade Arrow-Debreu securities on an exchange. We begin by defining the market clearing problem.

We require the following definition:
\begin{definition}
An Arrow-Debreu security is a contract which charges the price $\pi({\omega})$ to receive a payoff of $1$ in scenario $\omega$. We let $W_{i}(\omega)$ denote the bundle of Arrow-Debreu securities held by agent $i$ \citep[see][]{RalphAndSmeers}.
\end{definition}

We also require the following notation:
\begin{itemize}
\item $\theta_{i}$ is generator $i$'s risk-adjusted payoff.
\item $\theta_{k}$ is the ISO's risk-adjusted payoff.
\item $W_{k}(\omega)$ is the quantity of Arrow-Debreu securities purchased by the ISO in scenario $\omega$.
\item $\mathbb{P}_{im}(\omega)$ is the probability measure corresponding to the $m$th extreme point of generator $i$'s risk set.
\item $\mathbb{P}_{km}(\omega)$ is the probability measure corresponding to the $m$th extreme point of the ISO's risk set.
\end{itemize}

Assume that each generator submits the same offers as in SDM, that all generators and the ISO submit their risk sets before the market is cleared, and that the intersection of the participants' risk sets is non-empty. Then, clearing the market is equivalent to minimizing cumulative risk-adjusted disutility \cite{RalphAndSmeers}, i.e., solving the following risk-averse stochastic program:

\begin{align*}
\text{RASLP:}\quad & \min \sum_{i} \theta_{i} + \theta_{k}\\
\text{s.t. }\quad & \theta_{i} \geq \sum_{\omega}\mathbb{P}_{im}(\omega)\bigg(c_{i}X_{i}(\omega)+r_{u,i}U_{i}(\omega) \\
& \qquad \qquad +r_{v,i}V_{i}(\omega)-W_{i}(\omega)\bigg), \forall i,  \forall m,\\
& \theta_{k}+ \sum_{\omega}\mathbb{P}_{km}(\omega)W_{k}(\omega) \geq 0,  \forall m, \\
& \sum_{i \in \mathcal{T}(n)} X_{i}(\omega)+\tau_{n}(F(\omega)) \geq D_{n}(\omega),  \forall \omega \in \Omega, \\
& \sum_{i}W_{i}(\omega)+W_{k}(\omega)=0, \forall \omega \in \Omega, [\pi(\omega)], \\
& x + U(\omega) - V(\omega) = X(\omega),   \forall \omega \in \Omega,  \\
& F(\omega) \in \mathcal{F} , \quad \forall \omega \in \Omega, \\
& 0 \leq X(\omega) \leq G(\omega) , \ U(\omega), V(\omega), x \geq 0 , \quad \forall \omega \in \Omega,
\end{align*}
where we
enumerate the extreme points of each generator's risk set in order to express the market-clearing problem as a single linear program. Note that RASLP may not be a linear program of finite size, as participants may reflect their risk-aversion via non-polyhedral risk sets. Nonetheless, RASLP can be solved in a tractable fashion, by $(1)$ observing that it corresponds to maximizing risk-adjusted social welfare under the least risk-averse participants risk measure, and $(2)$ solving this risk-averse market clearing problem using either (a) an interior point method if the least risk-averse participant has a polyhedral risk measure or (b) the Bundle Method \citep[see][]{lemarechal1995new} if the least risk-averse participant has a non-polyhedral risk measure.

After solving RASLP, participants are remunerated for their dispatch in the same manner as SDM, and participants are remunerated with the term $W_{i}(\hat{\omega})-\sum_{\omega}\pi(\omega)W_{i}(\omega)$ in scenario $\hat{\omega}$ for their financial instruments, as per \cite{RalphAndSmeers}.

As noted by \cite{RalphAndSmeers}, the dual prices of the Arrow-Debreu securities, $\pi(\omega)$, correspond to the system optimizer's risk-adjusted probability measure. The equivalence between dual prices and the worst-case probability measure allows us to rewrite the system optimization objective function as:
\begin{align*}
    \min_{x,u,v} \rho(c^\top X(\omega)+r^\top_{u} U(\omega) + r^\top_{v} V(\omega)),
\end{align*}
where $\rho$ is a coherent risk measure with risk set $\mathcal{D}$. If ${\mathcal{D}=\{\mathbb{P}(\omega)\}}$ then ($1$) there exists a risk-neutral agent which absorbs all risk in the market, ($2$) the Arrow-Debreu securities are priced at $\mathbb{P}(\omega)$, and ($3$) there is no residual system risk \citep[see][]{RalphAndSmeers}.

Interestingly, unlike the risk-averse competitive equilibrium studied in the previous section, it is straightforward to elicit verifiable conditions for existence and uniqueness of a risk-averse competitive equilibrium in the presence of risk trading. Existence can be verified by solving the system optimization problem. Moreover, if $\mathcal{F}$ is a polyhedral set, uniqueness can be verified by
following \citep[Exercise $3.9$]{IntroLinearOptBT}.

Our main interest in this paper is determining the impact of the existence of financial instruments on the pre-commitment setpoint. Consequently, we change our perspective and assume that $\Omega$ represents the true distribution of uncertainty. Strictly speaking, the optimal solution to RASLP constitutes an SAA estimator of the optimal solution for the true distribution. However, SAA estimators are known to converge almost surely to the optimal solution for the underlying distribution \citep[see][]{Shapiro}. Therefore, the below results hold almost surely true for solutions to RASLP, wherever the sample of the underlying distribution is sufficiently rich.

We
now study the system optimization's first-order optimality condition with respect to each generator $i$. As we are considering a system optimization problem rather than an individual generator's problem, our objective is risk-adjusted expected fuel cost minimization rather than risk-adjusted expected profit maximization, and we are risk-averse to scenarios with high fuel plus deviation costs rather than scenarios with low nodal prices. This observation leads to the following proposition:

\begin{proposition}
\label{GeneralRepresentationWithAD}
Suppose that the system is risk-averse and endowed with the law invariant coherent risk measure $\rho:\mathcal{Z} \mapsto \mathbb{R}$, which has the Kusuoka representation:
\begin{align*}
    \rho[Z]=-\mathbb{E}[Z]+\kappa \sup_{\mu \in D} \int_{\beta=0}^{\beta=1}\frac{1}{\beta}q_{\beta}[Z] \mu d\beta.
\end{align*}
Then, for a given dispatch policy $X^\star(\omega)$, each generator makes the pre-commitment decision to produce $x^\star_{i}$, where:
\begin{align*}
    x^\star_i=F^{-1}_{X^\star_{i}(\omega)}\Big(\frac{r_{u,i}+(r_{u,i}+r_{v,i})(\kappa-\kappa\bar{\beta})}{(r_{u,i}+r_{v,i}){(1+\kappa(1-\bar{\beta}))}} \Big),\\
    \bar{\beta}=\int_{0}^{1}\mu^{RN}\beta d\beta, \ \kappa \in [0, \dfrac{1}{\bar{\beta}}],
\end{align*}
and $F^{-1}(\cdot)$ denotes the pseudoinverse CDF of the probability distribution of $X^\star_{i}(\omega)$.
\end{proposition}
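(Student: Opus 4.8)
The plan is to mirror the proof of Proposition~\ref{GeneralRepresentationNoAD}: reduce the system optimizer's pre-commitment problem to a collection of independent risk-averse newsvendor problems, and then invoke the explicit description of the worst-case measure of a Kusuoka risk measure due to \citet{MultiProductNewsVendor}. The only structural change from Proposition~\ref{GeneralRepresentationNoAD} is that the random variable being measured is now the system's \emph{cost} $c^\top X^\star(\omega)+r_u^\top U(\omega)+r_v^\top V(\omega)$ rather than an individual generator's profit; as flagged in the paragraph preceding the statement, this reverses the direction in which risk-aversion tilts the effective measure.

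First I would invoke the equivalence established earlier in this section, following \citet{RalphAndSmeers}, that solving RASLP amounts to $\min_{x,u,v}\rho\big(c^\top X(\omega)+r_u^\top U(\omega)+r_v^\top V(\omega)\big)$, where $\rho$ is the least risk-averse participant's law-invariant coherent risk measure. Fixing the dispatch policy at $X^\star(\omega)$, the objective's only dependence on $x_i$ is through generator $i$'s newsvendor cost $g_i(x_i;\omega):=r_{u,i}\max(X^\star_i(\omega)-x_i,0)+r_{v,i}\max(x_i-X^\star_i(\omega),0)$; since $g_i$ is convex in $x_i$ and $\rho$ is convex and monotone, the composite objective is convex, the subproblems decouple across $i$, and the first-order condition $0\in\partial_{x_i}\rho\big(g_i(x_i;\omega)+(\text{terms free of }x_i)\big)$ is necessary and sufficient. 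Unfolding this condition through the dual representation $\rho(Z)=\max_{\mu\in\mathcal D}\mathbb E_\mu[Z]$, Danskin's theorem, and the subdifferential of the piecewise-linear map $x_i\mapsto g_i(x_i;\omega)$ (which equals $-r_{u,i}$ on $\{X^\star_i(\omega)>x_i\}$, $r_{v,i}$ on $\{X^\star_i(\omega)<x_i\}$, and $[-r_{u,i},r_{v,i}]$ on the tie set), one obtains a worst-case measure $\mu^\star\in\mathcal D$ for the total cost with $\mu^\star\big(X^\star_i(\omega)\le x^\star_i\big)=r_{u,i}/(r_{u,i}+r_{v,i})$ --- the tie set carrying zero mass because $X^\star_i(\omega)$ is continuously distributed --- equivalently $x^\star_i=F^{-1}_{\mu^\star,X^\star_i}\!\big(r_{u,i}/(r_{u,i}+r_{v,i})\big)$.

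It then remains to rewrite $\mu^\star$ in terms of the physical CDF $F_{X^\star_i(\omega)}$. Here I would argue that $X^\star_i(\omega)$ and the total system cost are comonotone --- both are monotone in the realised wind, hence in net demand, by Remark~\ref{monotoneoperatorrtp} and the monotonicity of the optimal dispatch in net demand --- so that, by comonotone additivity of law-invariant coherent risk measures, the worst-case measure for the aggregate cost acts on the marginal law of $X^\star_i(\omega)$ as a pure distortion governed by the Kusuoka spectrum of $\rho$. By \citet{MultiProductNewsVendor}, setting $\alpha_i:=1/(1+\kappa(1-\bar\beta))\in(0,1]$ with $\bar\beta=\int_0^1\mu^{RN}\beta\,d\beta$ and $\kappa\in[0,1/\bar\beta]$, this distortion reweights onto the worst $\alpha_i$-fraction of scenarios, so $F_{\mu^\star,X^\star_i}(x)=\alpha_i^{-1}\big(F_{X^\star_i(\omega)}(x)-(1-\alpha_i)\big)^{+}$; substituting into the displayed first-order condition and solving gives $x^\star_i=F^{-1}_{X^\star_i(\omega)}\!\big(\alpha_i r_{u,i}/(r_{u,i}+r_{v,i})+(1-\alpha_i)\big)$, which is precisely the claimed expression. (In the no-trading setting of Proposition~\ref{GeneralRepresentationNoAD} the relevant profit is \emph{anti}-comonotone with $X^\star_i(\omega)$, so the distortion instead reweights onto the \emph{best} $\alpha_i$-fraction, $F_{\mu^\star,X^\star_i}(x)=\alpha_i^{-1}\min\{F_{X^\star_i(\omega)}(x),\alpha_i\}$, which accounts for the smaller quantile --- and hence the pre-commitment shortfall of Corollary~\ref{rnvsranoadb} --- there.)

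The step I expect to be the main obstacle is the identification of $\mu^\star$: one must justify that the worst-case measure for the \emph{aggregate} system cost --- a priori a joint functional of every generator's output, the network flows, and the wind realisation --- restricts to a canonical, rearrangement-invariant distortion of the one-dimensional marginal of $X^\star_i(\omega)$. This relies on the comonotonicity argument above together with the fact that $\mathcal D$ is a Kusuoka (spectral-type) family, so its worst-case elements are law-invariant; additional care is needed with ties (handled by the continuity assumption on the law of $X^\star_i(\omega)$), with the boundary values $\kappa=0$ and $\kappa=1/\bar\beta$, and with verifying that the resulting $x^\star_i$ lies in the interior of $[0,\max_\omega G_i(\omega)]$ so that the first-order condition is indeed the right optimality certificate.
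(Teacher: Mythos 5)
Your overall skeleton --- fix $X^\star(\omega)$, observe that $x_i$ enters only through a per-generator newsvendor cost, and read off the first-order condition $\mu^\star\big(X_i^\star(\omega)\le x_i^\star\big)=r_{u,i}/(r_{u,i}+r_{v,i})$ under a worst-case measure --- is in the spirit of the paper, which invokes the observation of \cite{ZPBB} that for given $X^\star(\omega)$ the system solves a newsvendor problem in $\mathbb{E}[r_{u,i}U_i(\omega)+r_{v,i}V_i(\omega)]$ for each $i$, and then differentiates the Kusuoka mean--deviation functional directly (Appendix~\ref{sssec:proofwithad}). The genuine gap is your identification of the worst-case measure. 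For the family $-\mathbb{E}[Z]+\kappa\sup_\mu\int q_\beta[Z]\,\mu\,d\beta$ actually used in the proofs, the maximizing density is, for each $\beta$ in the spectrum, \emph{two-valued}: $1-\kappa\beta$ on the favourable event and $1+\kappa(1-\beta)$ on the unfavourable event (this is exactly why $\kappa\in[0,1/\bar\beta]$ is the natural range). It is not the CVaR-type distortion $F_{\mu^\star}(x)=\alpha^{-1}\big(F(x)-(1-\alpha)\big)^{+}$ you attribute to \citet{MultiProductNewsVendor}, except at the boundary $\kappa=1/\bar\beta$. Your final fractile nonetheless agrees with the proposition only because the first-order condition sees just the density on the side containing $x_i^\star$, and both descriptions place density $1/\alpha=1+\kappa(1-\bar\beta)$ on the high-dispatch side; that coincidence requires $F_{X_i^\star}(x_i^\star)$ to lie above the \emph{true} switch point (a quantile of the cost distribution determined by $\hat\mu$, not $1-\alpha$), a regime condition you neither state nor check. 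In the paper this step appears as the assertion that the critical fractile of $Z_+$ is attained on the $-eD$ branch, which is what removes the constant term from $\partial\,\mathrm{CVaR}_\beta[Z_+]/\partial x$ and produces the factor $1+\kappa(1-\bar\beta)$. As written, your argument proves the formula for a different risk measure (a pure level-$\alpha$ distortion), not for the stated Kusuoka family.

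The second weak point is the comonotonicity claim you yourself flag. Remark~\ref{monotoneoperatorrtp} is a monotone-operator (inner-product) statement relating the price vector to the vector of net demands; it does not deliver scalar comonotonicity of an individual generator's dispatch $X_i^\star(\omega)$ with the aggregate system cost, and in a congested network an individual dispatch need not be monotone in realised wind at all. The paper sidesteps this entirely by applying the risk measure to the per-generator residual cost $\Pi(x,D)$ (the newsvendor with $p=0$), so it never has to restrict an aggregate worst-case measure to a one-dimensional marginal. If you want to keep the aggregate-cost route, you need either an explicit comonotonicity hypothesis or the per-generator decoupling the paper borrows from \cite{ZPBB}; with that, and with the corrected two-valued density plus the regime condition above, your derivation would close and would also explain the reversal relative to Proposition~\ref{GeneralRepresentationNoAD}.
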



\subchunk{Proof.}
See Appendix \ref{sssec:proofwithad}.
\qed

We remind the reader that the dispatch policy $X^\star(\omega)$ obtained from RASLP and used in Proposition \ref{GeneralRepresentationWithAD} is not necessarily the same dispatch policy as that obtained from RAEQ and used in Proposition \ref{GeneralRepresentationNoAD}. In particular, both dispatch policies are functions of $(1)$ the problem data and $(2)$ their (respective and possibly different) pre-commitment setpoints.

Proposition \ref{GeneralRepresentationWithAD} has the following interpretation: although the real-time price $\lambda(\omega)$ is a maximally monotone operator with respect to realised demand, meaning risk-averse generators place additional emphasis on high-wind scenarios and reduce their pre-commitment from a risk-neutral setpoint, Arrow-Debreu securities re-align generators incentives, causing them to view high-wind scenarios favourably, and increase their pre-commitment magnitude from its risk-neutral setpoint. We formalize this observation in the following corollary:

\begin{corollary}\label{rnvsrawithab}
Let the system be endowed with the coherent risk measure $\rho$. Then, for a given second stage dispatch policy $X^\star(\omega)$, each generator makes the pre-commitment decision $x^\star_i(X_i^\star(\omega))$, which is bounded from below by the following expression:
\begin{align*}
    x^\star_i(X_i^\star(\omega)) \geq x^\star_{i,RN}(X_i^\star(\omega)),
\end{align*}
where $x^\star_{i,RN}(X_i^\star(\omega))$ is generator $i$'s risk-neutral pre-commitment decision.
\end{corollary}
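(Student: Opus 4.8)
\subchunk{Proof proposal.}
The plan is to read the claim directly off the closed-form expression supplied by Proposition \ref{GeneralRepresentationWithAD} and reduce it to a one-line algebraic inequality together with monotonicity of the quantile map. First I would invoke Proposition \ref{GeneralRepresentationWithAD} to write generator $i$'s pre-commitment as $x^\star_i = F^{-1}_{X^\star_i(\omega)}(p_i)$, where
\begin{align*}
    p_i = \frac{r_{u,i}+(r_{u,i}+r_{v,i})(\kappa-\kappa\bar\beta)}{(r_{u,i}+r_{v,i})(1+\kappa(1-\bar\beta))},
\end{align*}
and observe that the risk-neutral decision is recovered by setting the risk coefficient $\kappa(1-\bar\beta)$ to its risk-neutral value $0$, so that $x^\star_{i,RN} = F^{-1}_{X^\star_i(\omega)}(p_i^{RN})$ with $p_i^{RN} = \frac{r_{u,i}}{r_{u,i}+r_{v,i}}$.

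Next I would show $p_i \geq p_i^{RN}$. Writing $t := \kappa(1-\bar\beta) \geq 0$ (nonnegativity follows from $\kappa \in [0,\tfrac{1}{\bar\beta}]$ and $\bar\beta \leq 1$), and noting the denominator $(r_{u,i}+r_{v,i})(1+t)$ is strictly positive, the inequality $p_i \geq p_i^{RN}$ is equivalent, after clearing denominators, to $r_{u,i}+(r_{u,i}+r_{v,i})t \geq r_{u,i}(1+t)$, i.e.\ to $r_{v,i}\,t \geq 0$, which holds since $r_{v,i}, t \geq 0$; equality holds exactly in the risk-neutral case $t=0$. Finally I would appeal to the fact that the pseudoinverse CDF $F^{-1}_{X^\star_i(\omega)}$ is nondecreasing on $[0,1]$ by construction, so $p_i \geq p_i^{RN}$ yields $x^\star_i = F^{-1}_{X^\star_i(\omega)}(p_i) \geq F^{-1}_{X^\star_i(\omega)}(p_i^{RN}) = x^\star_{i,RN}$, which is the claim.

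I do not anticipate a genuine obstacle: the only points requiring care are (i) checking that $t = \kappa(1-\bar\beta) \geq 0$ so the algebraic reduction has the right sign, and (ii) using monotonicity rather than strict monotonicity of $F^{-1}$, since the dispatch distribution may have atoms and the quantile map need not be strictly increasing — but nondecreasingness is all the argument needs. As in the remark following Corollary \ref{rnvsranoadb}, one should also keep in mind that $X^\star(\omega)$ is held fixed here, so the statement compares the two newsvendor quantiles for a common dispatch policy rather than two full equilibria.
\qed
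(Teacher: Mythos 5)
Your proposal is correct and follows exactly the route the paper intends: the paper leaves this corollary without an explicit proof (mirroring Corollary \ref{rnvsranoadb}, which it dispatches as ``a simple application'' of the corresponding proposition), and your argument is precisely that application of Proposition \ref{GeneralRepresentationWithAD} — comparing the risk-averse fractile to the risk-neutral fractile $\tfrac{r_{u,i}}{r_{u,i}+r_{v,i}}$ via $r_{v,i}\,\kappa(1-\bar\beta)\geq 0$ and using nondecreasingness of the pseudoinverse CDF. Your cautionary remarks about atoms in the dispatch distribution and about $X^\star(\omega)$ being held fixed are consistent with the paper's own caveats following the corollary.
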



We remind the reader that the second-stage dispatches from SLP and RASLP are distinct, meaning we cannot make a direct comparison between $x_i^\star$ and $x^\star_{i, RN}$. However, recalling that the optimal real-time dispatch is continuous in the pre-commitment decision $x$, Corollary \ref{rnvsrawithab} applies when the least risk-averse participant's behaviour is sufficiently close to risk-neutrality that the second-stage dispatches under SLP and RASLP are identical. Consequently, the above corollaries can be thought of as risk-averse sensitivity analysis results.

The analysis in the previous sections suggests that increasing the total amount of pre-commitment decreases expected nodal prices. Consequently, a pertinent question is ``does an auxiliary risk market remove the positive relationship between a generator's risk-aversion and its expected payoff?''.

\begin{proposition}\label{expectedriskaverseprofit2}
Let the system be risk-averse with risk measure $\rho$, which has a Kusuoka representation such that $\bar{\beta}:=\int_0^1 \mu^{RN} \beta d\beta$, ${\kappa \in [ 0 , \frac{1}{\bar{\beta}} ]}$, and combine these two quantities by defining $\alpha:=\frac{1}{1+\kappa(1-\bar{\beta})}$. Then, generator $i$'s expected profit is at least $-(1-\alpha)r_{v,i}x_i^\star$.
\end{proposition}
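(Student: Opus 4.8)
The plan is to mirror the proof of Proposition~\ref{expectedriskaverseprofit1}, substituting the with-trading quantile of Proposition~\ref{GeneralRepresentationWithAD} for the no-trading quantile of Proposition~\ref{GeneralRepresentationNoAD}. First I would rewrite the argument of the pseudoinverse CDF in Proposition~\ref{GeneralRepresentationWithAD} in terms of $\alpha=\frac{1}{1+\kappa(1-\bar\beta)}$: since $1+\kappa(1-\bar\beta)=\frac{1}{\alpha}$ we have $\kappa-\kappa\bar\beta=\kappa(1-\bar\beta)=\frac{1-\alpha}{\alpha}$, and feeding this into $\frac{r_{u,i}+(r_{u,i}+r_{v,i})(\kappa-\kappa\bar\beta)}{(r_{u,i}+r_{v,i})(1+\kappa(1-\bar\beta))}$ and cancelling the common factor $\frac{1}{\alpha}$ collapses the expression to $\frac{r_{u,i}+(1-\alpha)r_{v,i}}{r_{u,i}+r_{v,i}}$. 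Hence $x_i^\star$ is the $\frac{r_{u,i}+(1-\alpha)r_{v,i}}{r_{u,i}+r_{v,i}}$-quantile of the distribution of $X_i^\star(\omega)$, so that $X_i^\star(\omega)\le x_i^\star$ with probability $\frac{r_{u,i}+(1-\alpha)r_{v,i}}{r_{u,i}+r_{v,i}}$ and $X_i^\star(\omega)>x_i^\star$ with probability $\frac{\alpha r_{v,i}}{r_{u,i}+r_{v,i}}$.

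Next I would invoke \citep[Corollary 1]{ZPBB} exactly as in the proof of Proposition~\ref{expectedriskaverseprofit1}: the pool payoff of generator $i$ is at least $-r_{v,i}x_i^\star$ on the event $\{X_i^\star(\omega)\le x_i^\star\}$ (a downward deviation) and at least $r_{u,i}x_i^\star$ on the event $\{X_i^\star(\omega)>x_i^\star\}$ (an upward deviation). Taking expectations with the two probabilities above gives $\mathbb{E}[\text{profit}]\ge -r_{v,i}x_i^\star\cdot\frac{r_{u,i}+(1-\alpha)r_{v,i}}{r_{u,i}+r_{v,i}}+r_{u,i}x_i^\star\cdot\frac{\alpha r_{v,i}}{r_{u,i}+r_{v,i}}$; expanding, the numerator over $r_{u,i}+r_{v,i}$ is $-(1-\alpha)r_{v,i}(r_{u,i}+r_{v,i})$, so the bound collapses to $-(1-\alpha)r_{v,i}x_i^\star$, as claimed. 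Note that, in contrast with the no-trading case, this lower bound is non-positive, which is the substantive point of the proposition.

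The main obstacle is accounting rather than conceptual, and it has two parts. First, unlike RAEQ, the remuneration under RASLP also contains the Arrow-Debreu settlement $W_i(\hat\omega)-\sum_\omega\pi(\omega)W_i(\omega)$, and $X_i^\star(\omega)$ here is the system-optimal dispatch rather than a generator best response; I would deal with this either by stating the bound for the energy component of the payoff, or by restricting to the fully-shared-risk regime in which $\pi$ coincides with the reference measure $\mathbb{P}$ used to take the expectation, so that the settlement term is mean zero and may be dropped without weakening the inequality. Second, I would confirm that the \citep[Corollary 1]{ZPBB} payoff inequalities remain valid on the boundary events $X_i^\star(\omega)\in\{0,x_i^\star,G_i(\omega)\}$, using the real-time KKT relation $\lambda_{j(i)}(\omega)+\rho_i(\omega)=c_i$ with $-r_{u,i}\le\rho_i(\omega)\le r_{v,i}$ when $0<X_i^\star(\omega)<G_i(\omega)$ (which pins the payoff to exactly $-r_{v,i}x_i^\star$ or $r_{u,i}x_i^\star$) and checking the endpoints separately. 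Everything else reduces to the same one-line arithmetic as in Proposition~\ref{expectedriskaverseprofit1}.
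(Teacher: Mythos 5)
Your proposal is correct and follows essentially the same route as the paper: express the quantile from Proposition~\ref{GeneralRepresentationWithAD} as $\frac{r_{u,i}+(1-\alpha)r_{v,i}}{r_{u,i}+r_{v,i}}$, invoke \citep[Corollary 1]{ZPBB} for the payoff bounds $-r_{v,i}x_i^\star$ and $r_{u,i}x_i^\star$ on the two events, and take expectations (your algebra is in fact cleaner than the paper's, which contains a typographical slip in the quantile and in the displayed definition of $\alpha_i$). Your caveat about the Arrow--Debreu settlement matches the paper's own treatment, which interprets the bound as applying to the pool (energy-only) payoff and addresses the risk-market payoffs in the discussion following the proposition.
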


\begin{proof}
Proposition \ref{GeneralRepresentationWithAD} shows that the quantity $\alpha_i:=\frac{1}{\kappa_i-\kappa_i \bar{\beta}_i}$ summarizes the relationship between generation agent $i$'s pre-commitment and its production, since $x_i^\star$ is a $\frac{r_{u,i}+(1-\alpha_i) r_{u,i}}{r_{u,i}+r_{v,i}}$ quantile of the distribution of $X_i^\star(\omega)$. Therefore, $X_i(\omega)^\star \leq x_i^\star$ with probability $\frac{r_{u,i}+(1-\alpha_i) r_{u,i}}{r_{u,i}+r_{v,i}}$. Moreover, it follows from \citep[Corollary 1]{ZPBB} that each generation agent $i$ receives a payoff of at least
$-r_{v,i}x_i^\star$ with probability $\frac{r_{u,i}+(1-\alpha) r_{v,i}}{r_{u,i}+r_{v,i}}$ and receives a payoff of at least $r_{u,i}x_i^\star$ with probability $\frac{\alpha r_{v,i}}{r_{u,i}+r_{v,i}}$. Computing the expected payoff then yields the result.
\end{proof}

The above analysis might appear to suggest that expected cost-recovery is not guaranteed in RASLP. However,
the above analysis does not include payoffs from the auxiliary risk market. Indeed, by comparison with the feasible choice of non-participation in both markets, which has a certain payoff of $0$ under any coherent risk measure, 
it is not too hard to see that risk-averse generators must recover their risk-adjusted costs \textit{in expectation}. However, profits from the auxiliary risk market are derived by assuming risk, unlike
Proposition \ref{expectedriskaverseprofit1}.

\section{Conclusion}
This paper examines the stochastic dispatch problem from the perspective of risk averse participants, and presents a characterization of the impact of pre-commitment on real-time nodal prices, and its interplay with contracts. Furthermore, it provides a characterization of the impact of risk-aversion on pre-commitment, which allows risk-averse equilibria to be elicited according to the optimal solution of a linear program, even with non-polyhedral risk sets. We have demonstrated that risk aversion can be a reason to deviate from a system optimal pre-commitment level, but that a complete risk market would eliminate any incentive to deviate from the system optimal pre-commitment.

\section*{Acknowledgements}
The authors are grateful to Andy Philpott for stimulating discussions on the New Zealand Electricity Market, and several anonymous referees for comments on an earlier version of this paper which improved the quality of the manuscript.  Ryan Cory-Wright is grateful to the Department of Engineering Science at the University of Auckland, the Energy Centre at the University of Auckland and the Energy Education Trust of New Zealand for their financial support.

\begin{appendices}
\section{Omitted Proofs}
\subsection{Proof of Proposition \ref{GeneralRepresentationNoAD}}
\label{sssec:proofnoad}
To show this result, we model an arbitrary generator as a risk-averse newsvendor by using the notation in \cite{MultiProductNewsVendor}, and we convert to the notation used in the main body of this paper ex-post. This choice maintains consistency with the newsvendor literature, because conventional newsvendor models assume the cost of stocking a product is incurred in the first stage, whilst we assume that the cost of stocking a product is incurred in the second stage, and modify our deviation costs accordingly. Our approach can be viewed as a generalization of that taken in Section $5$ of \cite{MultiProductNewsVendor}, as we include the possibility that newsvendors might back-order in the second stage and incur an additional cost for doing so (i.e. ramp up their plant's production at a marginal cost of $c_i+r_{u,i}$), while the analysis conducted by \citet{MultiProductNewsVendor} precludes the possibility that $X^\star_i(\omega)>x^\star_i$.

We require the following terms:
\begin{itemize}
\item $e$ is the marginal emergency order cost.
\item $s$ is the marginal salvage value.
\item $p$ is the marginal sale price.
\item $c$ is the marginal ordering cost.
\item $x$ is the initial order quantity.
\item $D$ is the stochastic demand.
\item $y_{+}=\text{max}(y, 0)$ is the positive component of $y$.
\item $\Pi(x,D)$ is the newsvendor's profit with initial stock $x$ and demand $D$.
\item $\rho$ is a law-invariant coherent risk measure.
\end{itemize}
We
define the newsvendor's profit function as:
\begin{align*}
\Pi(x,D) & =pD-cx+s(x-D)_{+}-e(D-x)_{+},\\
& = (p-e)D+(e-c)x -(e-s)(x-D)_{+},\\
& = (e-c)x+Z_{+},
\end{align*}
where $Z_{+}:=(p-e)D -(e-s)(x-D)_{+}$.

We now invoke Kusuoka's Theorem \citep[see][]{Kusuoka2001} to represent the law-invariant coherent risk measure $\rho$ via the following expression:
\begin{align*}
    \rho[Z]=-\mathbb{E}[Z]+ \kappa \sup_{\mu \in D} \int_{\beta=0}^{\beta=1}\frac{1}{\beta}q_{\beta}[Z] \mu d\beta,
\end{align*}
where $q_{\beta}[Z]=\min_{\eta \in \mathbb{R}}\mathbb{E}[\max((1-\beta)(\eta-Z), \beta(Z-\eta))]= \beta(\mathrm{CVaR}_{\beta}[Z]+\mathbb{E}[Z])$.

The above expression permits a representation of the newsvendor's risk-adjusted profit via the following function:
\begin{align*}
    \rho(\Pi(x,D)) &=-\mathbb{E}[\Pi(x,D)]+\kappa \sup_{\mu \in D} \int_{\beta=0}^{\beta=1}\frac{1}{\beta}q_{\beta}[\Pi(x,D)] \mu d\beta,\\
    &= -(e-c)x -\mathbb{E}[Z_{+}]+\kappa \sup_{\mu \in D} \int_{\beta=0}^{\beta=1}\frac{1}{\beta}q_{\beta}[Z_{+}] \mu d\beta,
\end{align*}
as $(e-c)x$ is invariant and $q_{\beta}[Z+a]=q_{\beta}[Z]$ for nonrandom $a$.
Moving $\mathbb{E}[Z_{+}]$ within the integral and using the substitution \\${q_{\beta}[Z]=\beta(\mathrm{CVaR}_{\beta}[Z]+\mathbb{E}[Z])}$, provides the following expression:
\begin{align*}
    \rho(\Pi(x,D)) = -(e-c)x +\sup_{\mu \in D} \int_{\beta=0}^{\beta=1} \Big( \mathbb{E}[Z_{+}](\kappa \beta -1)\\
    \qquad +\kappa \beta \mathrm{CVaR}_{\beta}[Z_{+}] \Big) \mu d\beta.
\end{align*}

Observe that the $\beta$ quantile of $Z_{+}$ must be lower than in the risk-neutral case. In the risk-neutral case, the optimal choice of $x$ is the $\frac{c-s}{e-s}$ quantile of $D$ (this result is well-known, see \cite{Shapiro}), which corresponds to equality between the $\beta$th quantile of $Z_{+}$ and $(p-s)D$. In the risk-averse case, the $\beta$th quantile of $Z_+$ is (equal or) lower and is therefore equal to ${(p-s)D-(e-s)x}$ for some $x$ and some $D$. This observation allows us to define the partial derivatives of the expectation and $\mathrm{CVaR}$ terms within $\rho(\Pi(x,D)$ as follows:
\begin{align*}
\frac{\partial\mathbb{E}[Z_{+}]}{\partial x}&=-(e-s)\mathbb{P}(x > D), \\
\frac{\partial\mathbb{\mathrm{CVaR}_{\beta}}[Z_{+}]}{\partial x}&=-\frac{\partial}{\partial x}\Big\{(p-s)D-(e-s)x\\
& \qquad -\frac{1}{\beta}\mathbb{E}[(p-s)D-(e-s)x -Z_{+}] \Big\},\\
&=(e-s)-\dfrac{1}{\beta}(e-s)+\dfrac{1}{\beta}(e-s)\mathbb{P}(x > D),\\
&=(e-s)(1-\dfrac{1}{\beta})+\mathbb{P}(x > D)(e-s)\dfrac{1}{\beta}.
\end{align*}
Now, assume that the supremum over the risk set $D$ is uniquely attained at the measure $\hat{\mu}$; then we have the following first-order optimality condition:
\begin{align*}
    \frac{\partial \rho(\Pi(x,D))}{\partial x}&=-(e-c)\\
    &+ \int_{\beta=0}^{\beta=1}\Big( (e-s)( 1+\kappa-\kappa\beta) \mathbb{P}(x > D)\\
    & -\kappa(e-s)(1-\beta) \Big) \hat{\mu} d\beta,\\
    &=-(e-c)+\mathbb{P}(x > D)(e-s)\Big( 1+\kappa-\kappa \big(\int_{\beta=0}^{\beta=1}\beta \hat{\mu} d\beta \big)\Big)\\
    &-\kappa(e-s)(1-(\int_{\beta=0}^{\beta=1} \beta  \hat{\mu} d\beta)).
\end{align*}
Setting this condition to $0$ and re-arranging for $\mathbb{P}(x>D)$ yields:
\begin{align*}
\mathbb{P}(x > D)= \frac{(e-c)+\kappa(e-s)(1-\bar{\beta})}{(e-s)(1+\kappa-\kappa\bar{\beta})},
\end{align*}
where $\bar{\beta}=\int_{0}^{1}\mu^{RN}\beta d\beta$ is the expected value of the risk-averse probabilities with respect to the risk-neutral probabilities.

Netting against $1$ to find $\mathbb{P}(x \leq D)$ then yields:
\begin{align*}
\mathbb{P}(x \leq D)= \frac{(c-s)}{(e-s)(1+\kappa-\kappa\bar{\beta})}.
\end{align*}
To convert to our notation, observe that ${r_u=c-s}$ and ${r_v=e-c}$, giving \\
${r_u+r_v=e-s}$. Therefore, we have that:
\begin{align*}
\mathbb{P}(x \leq D)= \frac{r_u}{(r_u+r_v)(1+\kappa-\kappa\bar{\beta})},
\end{align*}
as required. Note that the corresponding quantile is not necessarily unique because $x^\star$ is attained by solving a linear program (which does not have a unique solution, in general), and if there are multiple optimal choices of $x^\star$ then multiple $X^\star(\omega)$'s may correspond to optimal choices of $x^\star$. \qed

\subsection{Proof of Proposition \ref{GeneralRepresentationWithAD}}
\label{sssec:proofwithad}
To show this result, we invoke the observation made by \cite{ZPBB} that for a given set of second-stage dispatches $X^\star(\omega)$, the system solves a newsvendor problem in order to determine the pre-commitment setpoint which minimizes the term $$\mathbb{E}[r_{u,i}U_i(\omega)+r_{v,i}V_i(\omega)]$$for each generator $i$. Consequently, we use the same notation as in the proof of Proposition \ref{GeneralRepresentationNoAD}. We require $p=0$, as we are considering a system optimization problem and any revenue accrued by a generator is provided by the ISO. Therefore, the system's residual cost with respect to a particular generator's pre-commitment decision, $\Pi(x,D)$, is defined by the following expression:
\begin{align*}
\Pi(x,D) & =-cx+s(x-D)_{+}-e(D-x)_{+},\\
& = -eD+(e-c)x -(e-s)(x-D)_{+},\\
& = (e-c)x+Z_{+}.
\end{align*}
By following the steps outlined in Appendix \ref{sssec:proofnoad}, we obtain the following risk-adjusted profit function:
\begin{align*}
    \rho(\Pi(x,D)) = -(e-c)x -\mathbb{E}[Z_{+}]+\kappa \sup_{\mu \in D} \int_{\beta=0}^{\beta=1}\dfrac{1}{\beta}q_{\beta}[Z_{+}] \mu d\beta.
\end{align*}
Now observe that since $p=0$, the risk-neutral critical fractile becomes $-eD$. Since $s(x-D)-ex$ gives a lower system cost than $-eD$ we therefore have that the critical fractile within each $\mathrm{CVaR}$ term becomes equal to $-eD$. This situation is similar to Section $5.3$ of \cite{MultiProductNewsVendor}, although we include emergency holding costs. Consequently, the partial derivatives of the terms which constitute $\rho$ become:
\begin{align*}
\frac{\partial\mathbb{E}[Z_{+}]}{\partial x}&=-(e-s)\mathbb{P}(x > D), \\
\frac{\partial\mathbb{\mathrm{CVaR}_{\beta}}[Z_{+}]}{\partial x}&=-\frac{\partial}{\partial x}\Big\{-eD-\dfrac{1}{\beta}\mathbb{E}[-eD -Z_{+}] \Big\}=\dfrac{1}{\beta}(e-s)\mathbb{P}(x > D).
\end{align*}
Now assume that $\mu=\hat{\mu}$ is the unique optimal pdf. Then, we have the following first-order condition:
\begin{align*}
    \frac{\partial \rho(\Pi(x,D))}{\partial x}&=-(e-c)+\int_{\beta=0}^{\beta=1}\Big( (e-s)( 1+\kappa-\kappa\beta) \mathbb{P}(x > D)\Big) \hat{\mu} d\beta,\\ &=-(e-c)+\mathbb{P}(x > D)(e-s)\Big( 1+\kappa-\kappa \int_{\beta=0}^{\beta=1}\beta \hat{\mu} d\beta\Big).
\end{align*}
Setting this condition to $0$ and re-arranging for $\mathbb{P}(x>D)$ yields:
\begin{align*}
\mathbb{P}(x > D)= \frac{(e-c)}{(e-s)(1+\kappa-\kappa\bar{\beta})},
\end{align*}
where $\bar{\beta}=\int_{0}^{1}\mu^{RN}\beta d\beta$ is the expected value of the risk-averse probabilities with respect to the risk-neutral probabilities.\\
Netting against $1$ to find $\mathbb{P}(x \leq D)$ then yields:
\begin{align*}
\mathbb{P}(x \leq D)= \frac{(c-s)+(e-s)(\kappa-\kappa\bar{\beta})}{(e-s)(1+\kappa-\kappa\bar{\beta})}.
\end{align*}
To convert to our notation, observe that ${r_u=c-s}$ and ${r_v=e-c}$, giving \\
${r_u+r_v=e-s}$. Therefore, we have that:
\begin{align*}
\mathbb{P}(x \leq D)= \frac{r_u+(r_u+r_v)(\kappa-\kappa\bar{\beta})}{(r_u+r_v)(1+\kappa-\kappa\bar{\beta})},
\end{align*}
as required.
Note that the corresponding quantile is not necessarily unique. \qed
\end{appendices}

\end{document}